\documentclass{amsart}
\usepackage{graphicx, amsmath, amsfonts, amssymb, amsthm, theoremref, hyperref, array, booktabs, caption, xparse, enumitem, color, mathtools} 
\captionsetup{format=plain}
\newtheorem{theorem}{Theorem}
\newtheorem{lemma}[theorem]{Lemma}
\newtheorem{corollary}[theorem]{Corollary}

\newtheorem{prop}{Proposition}
\newtheorem{remark}[theorem]{Remark}

\newcommand{\density}{\rho}
\newcommand{\critdens}{\density_\mathrm{c}}
\newcommand{\Z}{\mathbb{Z}}
\newcommand{\N}{\mathbb{N}}
\newcommand{\R}{\mathbb{R}}
\newcommand{\Carpet}{\mathfrak{C}}
\NewDocumentCommand{\pesc}{ o }{%
  p_{\mathrm{esc}}\IfValueT{#1}{(#1)}
}
\NewDocumentCommand{\preturn}{ o }{%
	p_{\mathrm{ret}}\IfValueT{#1}{({#1})}
}

\NewDocumentCommand{\Green}{ O{} }{%
  \ensuremath{\mathbf{G}_{#1}}%
}
\newcommand{\E}{\mathbf{E}}
\renewcommand{\P}{\mathbf{P}}
\newcommand{\Vertices}{\Z^d}
\newcommand{\Graph}{\Z^d}
\newcommand{\FiniteVertices}{V}
\newcommand{\SubFiniteVertices}{U}
\newcommand{\s}{\mathfrak{s}}
\newcommand{\lambdasleep}{\lambda_\s}
\newcommand{\lambdajump}{\lambda_\mathrm{J}}
\newcommand{\psleep}{p_\s}
\newcommand{\Jump}{\mathrm{J}}
\newcommand{\pjump}{p_\Jump}
\newcommand{\config}{\eta}
\newcommand{\configg}{\xi}
\newcommand{\initdistr}{\nu}

\newcommand{\Ch}{\mathsf{Ch}}

\NewDocumentCommand{\Visits}{o}{%
	\mathsf{Vis}%
	\IfValueTF{#1}{({#1})}{}%
}
\NewDocumentCommand{\Revisits}{o}{%
	\mathsf{Revis}%
	\IfValueTF{#1}{({#1})}{}%
}
\newcommand{\Weak}{\mathsf{W}}
\newcommand{\Strong}{\mathsf{S}}
\newcommand{\SymbStab}{\mathsf{S}}
\newcommand{\OdomSymb}{\mathsf{O}}
\NewDocumentCommand{\SpacedOp}{ O{} m m }{
	{#2#3}%
	\IfValueTF{#1}{^{#1}\!}{}%
}
\NewDocumentCommand{\Stab}{o}{\SpacedOp[#1]{}{ \SymbStab }}
\NewDocumentCommand{\WeakStab}{o}{\SpacedOp[#1]{\Weak}{ \SymbStab }}
\NewDocumentCommand{\StrongStab}{o}{\SpacedOp[#1]{\Strong}{ \SymbStab }}
\NewDocumentCommand{\Odom}{o}{\SpacedOp[#1]{}{ \OdomSymb }}
\NewDocumentCommand{\WeakOdom}{o}{\SpacedOp[#1]{\Weak}{ \OdomSymb }}
\NewDocumentCommand{\StrongOdom}{o}{\SpacedOp[#1]{\Strong}{ \OdomSymb }}

\newcommand{\odom}{m}
\newcommand{\weakodom}{\odom^\Weak}
\newcommand{\weakconfig}{\config^\Weak}
\newcommand{\strongodom}{\odom^\Strong}
\newcommand{\strongconfig}{\config^\Strong}

\newcommand{\instr}{\mathcal{I}}
\newcommand{\jumpinstr}[1]{\mathfrak{j}_{#1}}
\newcommand{\neutral}{\iota}
\newcommand{\origin}{0}
\newcommand{\sleepoutcome}{b}

\DeclareMathOperator{\Ber}{Bernoulli}
\DeclareMathOperator{\Geom}{Geom}

\NewDocumentCommand{\Fill}{o}{%
	\mathsf{Fill}%
	\IfValueTF{#1}{_{#1}}{}%
}
\newcommand{\ReturnEv}[1]{\mathsf{Return}_{#1}}
\newcommand{\Emp}[1]{\mathsf{Emp}_{#1}}

\newcolumntype{D}{>{\raggedright\arraybackslash}p{1.3cm}}   
\newcolumntype{C}{>{\raggedright\arraybackslash}p{3.2cm}}   
\newcolumntype{K}{>{\raggedright\arraybackslash}p{3.2cm}}   
\newcolumntype{O}{>{\raggedright\arraybackslash}p{3.55cm}}   

\title[Asymptotic critical density of activated random walk]{Asymptotic behavior of the critical density of activated random walk}
\author{Harley Kaufman}\address{Harley Kaufman, Department of Mathematics, Baruch College, City University of New York}
	\email{\texttt{Harkauf100@gmail.com} } 
\author{Josh Meisel}\address{Josh Meisel, Department of Mathematics, Graduate Center, City University of New York}
	\email{\texttt{jmeisel@gradcenter.cuny.edu}}

\begin{document}

\begin{abstract}
    We study the asymptotic behavior of the critical density of the activated random walk model as the sleep rate $\lambda$ tends to $0$ and $\infty$. For large $\lambda$, we prove new lower bounds in dimensions 1 and 2, showing that in one dimension the critical density approaches $1$ superpolynomially fast. For small $\lambda$, we prove a new lower bound in two dimensions for how fast the critical density vanishes. We also obtain the first-order approximation for transient walks in both regimes.
\end{abstract}
\maketitle

\section{Introduction}

Activated random walk (ARW) is an interacting particle system taking place on the integer lattice $\Z^d$. A descendant of the abelian sandpile and stochastic sandpile models, it is intended as a proof-of-concept for the theory of self-organized criticality from physics \cite{bak1987soc, dickmanSOC, dickman2010activated}. In ARW, particles can be in one of two states, active and sleeping. At the start, all particles are active, with the configuration of particles $\config \colon \Vertices \to \N$ distributed according to some translation-ergodic \emph{initial distribution} $\initdistr$. Active particles perform independent continuous-time random walks at \emph{jump rate} $1$, governed by the jump distribution $P \colon \Vertices \times \Vertices \to [0,1]$. In this article, we assume the jump distribution is translation-invariant, that is $P(x,y)$ is a function of $y - x$. Active particles fall asleep at \emph{sleep rate} $\lambda > 0$. Sleeping particles remain inert, but are activated when visited by another particle. In particular, a particle that falls asleep at a site occupied by any other particle is immediately reactivated. We say the system \emph{fixates} if each particle eventually falls asleep and is not woken up again. Otherwise we say the system \emph{stays active}. 

The model is known to undergo a phase transition with respect to the initial density $\density \coloneqq \E|\config(\origin)|$, treating $\lambda$ as a fixed constant: for any nearest-neighbor walk $(\Z^d, P)$ and sleep rate $\lambda > 0$, there exists a critical density $\critdens = \critdens(\Z^d, P, \lambda)$ such that, for any translation-ergodic initial distribution with density $\density$, the system fixates a.s.\ if $\density < \critdens$ and a.s.\ stays active if $\density > \critdens$ \cite{rs, universalityTheorem, AmirGurel-Gurevich10, rt}. As noted in \cite{rollaSurvey}, such a critical density exists for general \emph{unimodular walks}, defined in Section~\ref{subsec:extensions}, which includes all translation-invariant walks on $\Z^d$.

    The critical density is non-decreasing and continuous in $\lambda$ \cite{rs, continuity}. By the collective works of \cite{AmirGurel-Gurevich10, Shellef10, rs, StaufferTaggi18} it is known that $0 < \critdens(\Z^d, P, \lambda) \le 1$ and that $$\lim_{\lambda \to \infty} \critdens = 1.$$
    The strict inequality $\critdens < 1$ and the limit 
    $$\lim_{\lambda \to 0} \critdens = 0$$
    have been shown in essentially complete generality \cite{Taggi2016, rt, BasuRiddhipratim2018NfCS, StaufferTaggi18, Taggi2019, ForienNicolas2024Apfa, AsselahAmine2024Tcdf, hu2022active}, specifically for simple symmetric random walk (SSRW) in all dimensions as well as all transient walks.
    
    The limiting $\lambda = 0$ case is a system of non-interacting random walks, and the limiting $\lambda = \infty$ case is a sort of Diffusion Limited Aggregation \cite{lawler1992internal}. Measuring the asymptotic rate of $\critdens$ as the system approaches these two extremes gives us information about the dying-out effects of the sleep-activation dynamics, and remains an ongoing area of investigation.

\subsection{Results}

We state our results here, and in Section~\ref{subsec:prior-results} provide further detail on prior asymptotic bounds and heuristics. For a quick glance, Table~\ref{tab:results} provides a full picture of the current state of progress for simple symmetric ARW on $\Z^d$, where we use the abbreviation $\critdens(\Z^d, \lambda)$ for the critical density. The Green's function is denoted $\Green[d] = \Green[d](\origin, \origin) \ge 1$, the expected number of origin hits of a random walk started at the origin, and the probability of escape for $d \ge 3$ is denoted $\pesc(d) \coloneqq 1/\Green[d]$. We use the standard asymptotic notation $O$, $o$, $\Theta$, $\Omega$, and $\omega$ for upper, strict upper, sharp, lower, and strict lower bounds, respectively.

\begin{table}
\centering
\begin{minipage}{\textwidth}
\centering
\begin{tabular}{D C K O}
\toprule
{\normalsize\bfseries $\mathbf{d}$} & {\normalsize\bfseries Heuristic} & {\normalsize\bfseries Known bound} & {\normalsize\bfseries Our bound} \\
\midrule

\multicolumn{4}{c}{\normalsize Low sleep rate: $\critdens(\Z^d, \lambda)$ as $\lambda \to 0$} \\
\midrule
$1$ &
$\Theta(\sqrt{\lambda})$ &
$\Theta(\sqrt{\lambda})$  &
\\
$2$ &
$\Theta(\lambda\log(\lambda^{-1}))$ &
$O(\lambda \log^a(\lambda^{-1}))$\textsuperscript{†}  &
$\Omega(\lambda\log(\lambda^{-1}))$ \\
$\ge 3$ &
$\Theta(\lambda)$ &
$\Theta(\lambda)$  &
$\Green[d]\lambda + o(\lambda)$ \\
\midrule

\multicolumn{4}{c}{\normalsize High sleep rate: $1 - \critdens(\Z^d, \lambda)$ as $\lambda \to \infty$} \\
\midrule
$1$ &
$\Theta(\lambda^{-1})$\textsuperscript{‡} &
$e^{-O(\lambda)}$ &
$o(\lambda^{-n})$ for any $n \ge 1$ \\
$2$ &
$\Theta(\lambda^{-1})$ &
$\Omega((\lambda \log^2 \lambda)^{-1})$ &
$O((\lambda\log \lambda)^{-1})$ \\
$\ge 3$ &
$\Theta(\lambda^{-1})$ &
$\Theta(\lambda^{-1})$ &
$\pesc(d)\lambda^{-1} + o(\lambda^{-1})$ \\
\bottomrule
\end{tabular}
\caption{Heuristic, known, and new bounds for the asymptotic behavior of $\critdens(\Z^d, \lambda)$. The heuristics are taken from \cite{AsselahAmine2024Tcdf} and the known results are from \cite{AsselahRollaSchapira2022, AsselahAmine2024Tcdf, HoffmanChristopher2023APfA}, and \cite{Taggi2019}.}
\label{tab:results}
\end{minipage}

\vspace{1mm}

\begin{flushleft}
\footnotesize
\textsuperscript{†} $a$ is an unspecified positive constant. Our new bound implies that $a \ge 1$.\\
\textsuperscript{‡} As mentioned below, a recent vote showed some deviation from this heuristic in the community, including from one of its authors, who held the vote.
\end{flushleft}
\end{table}

In dimension 1, Hoffman, Richey and Rolla showed that there is at least an exponentially small gap between $\critdens(\Z, \lambda)$ and $1$, \cite{HoffmanChristopher2023APfA}, that is 
$$\critdens(\Z, \lambda) \le 1 - e^{-c\lambda}$$
for some constant $c > 0$ and all large $\lambda$. A vote was taken at the 2025 Activated Random Walk focused workshop at the Erdős Center in Budapest on whether $\critdens(\Z, \lambda) \to 1$ as $\lambda \to \infty$ polynomially or exponentially fast (or at some intermediate speed), resulting in a 3--3 tie. We partially resolve the question, showing superpolynomial convergence, contradicting the second author's vote for a polynomial rate.

\begin{theorem}\thlabel{thm:high-sleep-rate-d-1}
    As $\lambda \to \infty$,
    $$
        \critdens(\Z, \lambda) = 1 - o(\lambda^{-n})
    $$
    for all $n \ge 1$.
\end{theorem}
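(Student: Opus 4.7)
Plan: For each fixed $n \ge 1$, I would show fixation at density $\rho = 1 - C_n / \lambda^n$ for all sufficiently large $\lambda$, which by the universality theorem is equivalent to the stated bound. I take the initial distribution to be Bernoulli($\rho$), so that each occupied site hosts a single active particle. In the site-wise representation each vertex $x$ carries an i.i.d.\ stack of instructions, each being ``sleep'' with probability $\psleep = \lambda/(1+\lambda)$ and ``jump to a uniform neighbor'' with probability $\pjump = 1/(1+\lambda)$.

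The guiding heuristic is that at high $\lambda$ almost every particle sleeps at its starting site before ever jumping: a particle whose first instruction is ``sleep'' fixates in place, since its site is a singleton under Bernoulli. Only the fraction $\approx 1/\lambda$ of occupied sites containing an ``initial jumper'' seed any activity, and a single-step branching estimate shows that each subsequent wake-up in the ensuing cascade occurs with probability $O(\rho \pjump) = O(1/\lambda)$, producing subcritical cascades of $O(1)$ expected size. A naive first-moment argument of this kind would yield only $1 - \critdens = O(1/\lambda)$, so an iterative refinement is essential. My plan is a multi-scale renormalization: starting from the trivial bound $f_0 \le 1$, assume inductively that $1 - \critdens \le f_{k-1}(\lambda)$ and partition $\Z$ into blocks of scale $L_k(\lambda)$. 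I show that a block is ``bad'' (its associated cascade crosses the boundary) with probability at most $C f_{k-1}/\lambda$, where the additional factor of $1/\lambda$ encodes the cost of a seed both being present and propagating across the block. Treating bad blocks as the occupied sites of a renormalized ARW and invoking a stabilization comparison in the spirit of the paper's weak/strong stabilization framework, I obtain $f_k \le C f_{k-1}/\lambda$, and iterating $n$ times yields $f_n = O(\lambda^{-n})$.

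The main obstacles I anticipate are threefold. First, the recurrence of the one-dimensional walk means cascades, though short in time (number of jumps per particle), may traverse arbitrary distances, so I must carefully bound their spatial extent via tail estimates on 1D excursions or a truncation at a scale like $\sqrt{L_k}$. Second, closing the renormalization requires that the induced block dynamics be dominated by an ARW at a strictly lower effective density, in a sense preserved by the relevant stabilization comparison rather than by a naive coupling; this likely demands an abelian/monotonicity argument to transfer the inductive hypothesis. Third, matching the block scales $L_k(\lambda)$ to the density scales $\lambda^{-k}$ precisely enough to avoid losing polynomial factors at each iteration is a delicate balancing act, since $L_k$ too small wastes a power of $\lambda$ while $L_k$ too large invalidates the concentration estimates underlying the block bad-probability. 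I expect the interaction of these three issues to be the technical heart of the proof.
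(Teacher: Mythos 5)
Your high-level intuition --- that a first-moment argument plateaus at $1-\critdens = O(\lambda^{-1})$ and that an iterative bootstrap is needed --- is exactly right and matches the structure of the paper's argument. The paper proves by induction that $1-\critdens \le C_k\lambda^{-(k+1)}\log^k\lambda$, gaining a factor of $\lambda^{-1}\log\lambda$ per iteration, which is weaker than your claimed $\lambda^{-1}$ per step but still suffices for the superpolynomial conclusion. However, your proposed mechanism for closing the recursion --- coarse-graining $\Z$ into blocks and treating bad blocks as the occupied sites of a renormalized ARW --- contains a genuine gap, one which you yourself flag as the ``technical heart'' but which I do not see how to fill. ARW does not renormalize to a block-level ARW: a bad block emits a random number of particles on random trajectories depending on the block's internal instructions, and this is not captured by a binary good/bad label that could play the role of an active particle in a coarse-grained system. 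Nor is it clear how to transfer the inductive hypothesis, stated for the original ARW on $\Z$, to whatever block process emerges; no abelian or monotonicity comparison is known that accomplishes this. (A smaller issue: a particle whose first instruction is sleep does \emph{not} ``fixate in place'' --- it falls asleep but may be reawakened by later visitors; whether it stays asleep is exactly the question at stake.)

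The paper sidesteps renormalization entirely. It fixes a large ball $B_r$, fills it with particles, and analyzes $\P(\origin \notin \Stab[\FiniteVertices]\config)$ through the strong-stabilization-via-weak-stabilization procedure of Lemma~\thref{lem:indie-trials}: the origin ends empty iff all $\Ch$ independent $\Ber(\lambdasleep)$ sleep trials fail, giving $\P(\origin \notin \Stab[\FiniteVertices]\config) = \psi_\Ch(\lambdajump)$. The bootstrap is then achieved through a channel quite different from block renormalization. Let $\Emp{x,j}$ be the event that $x$ is empty in the $j^\text{th}$ weakly stable configuration $\weakconfig_j$. Since a hole at $x$ in $\weakconfig_j$ leads, with probability at least $\lambdasleep\lambdajump^{j-1} \gtrsim \lambda^{-(j-1)}$, to $x$ being empty in the \emph{final} stable configuration (by choosing which sleep trials succeed), the inductive bound on $\P(x \notin \Stab[\FiniteVertices]\config)$ translates into a bound $\P(\Emp{x,j}) \lesssim \lambda^{j-(k+1)}\log^{k-1}\lambda$. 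This controls the size of the carpet $\Carpet_j$ of occupied sites around the origin after the $j^\text{th}$ weak stabilization, and the one-dimensional gambler's ruin estimate $\P(\Ch = j \mid \Carpet_j = i) \le 1/(i+1)$ then bounds the escape probability. Summing over carpet sizes and over $j$ closes the induction. This carpet/hole bookkeeping, rather than spatial renormalization, is the key idea your plan is missing.
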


We obtain other high-sleep-rate bounds. Denote the Green's function for general walks by $\Green[(\Graph, P)] = \Green[(\Graph, P)](\origin, \origin)$ and let $\pesc = \pesc(\Z^d, P) \coloneqq 1/\Green[(\Z^d, P)]$ for transient $(\Graph, P)$. As a reminder, all jump distributions are assumed to be translation-invariant. 

\begin{theorem}\thlabel{thm:high-sleep-rate}
    As $\lambda \to \infty$,
	\begin{enumerate}[label=(\roman*), ref=(\roman*)]
		\item\label{thm:high-sleep-rate:Z2} $\critdens(\Z^2, \lambda) =  1 - O((\lambda \log \lambda)^{-1})$.
		\item\label{thm:high-sleep-rate:transient} $\critdens(\Graph, P, \lambda) = 1 - \pesc \lambda^{-1} + o(\lambda^{-1})$ if $(\Graph,P)$ is transient.
        \item\label{thm:high-sleep-rate:recurrent} $\critdens(\Graph, P, \lambda) = 1 - o(\lambda^{-1})$ if $(\Graph,P)$ is recurrent.
	\end{enumerate}
\end{theorem}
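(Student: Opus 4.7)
The plan is to work in the site-wise abelian instruction-stack representation: at each site $x$ draw an iid stack whose entries are independently \emph{sleep} with probability $\psleep = \lambda/(1+\lambda)$ or \emph{jump by $P$} with probability $1/(1+\lambda)$. By the abelian property of ARW, the system fixates if and only if some legal firing sequence leaves a.s.\ finite odometers. All three parts of \thref{thm:high-sleep-rate} state upper bounds on $1 - \critdens$; part~(ii) additionally demands a matching lower bound.

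A fresh particle pops instructions until it hits a sleep, making $\Geom(\psleep) - 1$ jumps, of mean $1/\lambda$; during these jumps it explores a random set of sites. The geometric quantity governing the fixation threshold is the expected \emph{range} (number of distinct sites visited) of a walk of $\sim 1/\lambda$ steps of $(\Graph, P)$: a transient walk gives range $\sim \pesc/\lambda$; simple symmetric walk on $\Z^2$ gives range $\sim (\lambda \log \lambda)^{-1}$; and any recurrent walk on $\Z^d$ yields range $= o(1/\lambda)$.

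For the fixation direction (upper bounds on $1 - \critdens$ in all three parts), the plan is to feed these range estimates into a finite-box stabilization. Starting from iid density $\density = 1 - \epsilon$ with $\epsilon$ a constant above the relevant range scale, I would show that each awake particle finds an empty site inside its excursion with probability bounded below; the odometer in expectation is then controlled by a sum over expected returns weighted by the geometric sleep tail, which reduces cleanly to the range calculation, and a Borel--Cantelli argument over nested boxes gives the a.s.\ fixation. For part~(ii), the constant $\pesc$ emerges from the range law $\pesc = \lim_n \E|\mathrm{Range}_n|/n$; part~(iii) is the same machinery with the $o(1/\lambda)$ recurrent range bound; part~(i) uses the sharp $\Z^2$ asymptotic $\E|\mathrm{Range}_n| \sim \pi n/\log n$ with $n \asymp \lambda^{-1}$.

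The lower bound on $1 - \critdens$ in (ii) is the more delicate direction: I would adapt a Rolla--Sidoravicius-style translation-ergodic rearrangement, showing that at density $\density = 1 - (\pesc - \delta)/\lambda$ the excess mass inside a finite box cannot be absorbed because the per-site sleep capacity of single-particle excursions is of order $\pesc/\lambda$. The main obstacle throughout is the passage from single-particle range estimates to the true ARW threshold: a particle that wakes a sleeping neighbor triggers a cascade, and identifying the exact constant $\pesc$ (rather than merely the order $1/\lambda$) requires showing that these cascades contribute only $o(1/\lambda)$ corrections. I expect this to be handled by viewing the odometer as a divergence-free flow on $\Graph$ and coupling to the idealized problem of independent geometrically-killed walks, where $\pesc$ appears as the escape probability of the limiting walk.
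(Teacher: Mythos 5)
The proposal rests on a scale misconception that would not deliver the constant $\pesc$ (or the $\log$ in dimension $2$). You identify the governing quantity as the expected range of a walk of $\sim 1/\lambda$ steps and extract $\pesc$ from the range law $\E|\mathrm{Range}_n| \sim \pesc\, n$. But that law is an $n\to\infty$ asymptotic, and here $n\to 0$: a fresh particle makes $\Geom(\psleep)-1$ jumps with mean $1/\lambda\to 0$, so $\E[\mathrm{Range}-1]=\sum_{n\ge 1}\lambdajump^{\,n}\,\P(S_n\text{ is fresh})\sim\lambdajump\sim\lambda^{-1}$ for \emph{every} walk (the first jump always lands on a fresh site), with no dependence on transience and no $\log\lambda$ in dimension $2$. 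So the ``range of a short walk'' picture only reproduces the crude $1-O(\lambda^{-1})$ bound already implicit in $\critdens\ge\lambdasleep$.

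The mechanism that actually produces $\pesc$ is different, and it is the one the paper exploits. When $\density$ is close to $1$, the stable configuration is a dense carpet of sleeping particles, and a particle jumped out of the origin \emph{glides over that carpet without any opportunity to sleep}, since sleep instructions are inert on occupied sites. Its excursion is therefore long (its length diverges with the carpet radius $r$, independently of $\lambda$), and the relevant quantity is the probability $\pesc[r]$ that this long walk escapes $B_r$ before returning to $\origin$, which tends to $\pesc$ as $r\to\infty$ (and to $0$ logarithmically in $r$ for $\Z^2$, giving the extra $\log\lambda$ after balancing $r$ against $\lambda$). The paper packages this through the weak--strong stabilization framework (\thref{lem:indie-trials}, \thref{lem:expected-chances}): the number of sleep trials $\Ch$ is stochastically bounded below by a $\Geom(\pesc[r]+|B_r|\lambda^{-1})$ variable via the carpet procedure, and $\P(\origin\notin\Stab[\FiniteVertices]\config)=\psi_\Ch(\lambdajump)$. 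This framework is absent from your proposal, and without it neither the gliding excursion nor the quantitative control of $\Ch$ appears. Finally, the matching upper bound in~\ref{thm:high-sleep-rate:transient} is obtained in the paper not by a translation-ergodic rearrangement but by a conservation-of-mass contradiction: one shows $\Ch'\approx\Ch$ using $\E[\Ch]\le\Green$, deduces $\P(\origin\notin\Stab[\FiniteVertices]\configg)\gtrsim \pesc\lambda^{-1}$ for near-full i.i.d.\ $\configg$, and then invokes \thref{prop:mass-conservation} to rule out $\critdens>1-\alpha\lambda^{-1}$ for $\alpha<\pesc$. Your ``divergence-free flow / coupling to geometrically-killed walks'' outline would need to be made concrete to replace this, and as stated it inherits the same scale confusion.
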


The lower bound acquired in dimension 2 matches the heuristic prediction of Asselah, Forien, and Gaudilli\`ere \cite{AsselahAmine2024Tcdf}, explained in the following section, and closely matches their rigorous $1 - \Omega((\lambda \log^2 \lambda)^{-1})$ upper bound. 

For low sleep rates, we prove the following.

\begin{theorem}\thlabel{thm:low-sleep-rate}
	As $\lambda \to 0$, 
	\begin{enumerate}[label=(\roman*), ref=(\roman*)]
		\item\label{thm:low-sleep-rate:Z2} $\critdens(\Z^2, \lambda) = \Omega(\lambda \log (\lambda^{-1}))$.
		\item\label{thm:low-sleep-rate:transient} $\critdens(\Graph, P, \lambda) = \Green[(\Graph, P)] \lambda + o(\lambda)$ for any transient walk $(\Graph,P)$. 
        \item \label{thm:low-sleep-rate:recurrent}
        $\critdens(\Graph, P, \lambda) =\omega(\lambda)$ for any recurrent walk $(\Graph,P)$.
	\end{enumerate}
\end{theorem}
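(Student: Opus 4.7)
All three parts are lower bounds on $\critdens$, equivalently fixation statements below the stated density. I use the site-wise (Diaconis--Fulton / Rolla--Sidoravicius) stabilization framework: at each $x$ one places an i.i.d.\ stack of instructions $\instr_x$, each being independently a sleep with probability $\psleep = \lambda/(1+\lambda)$ or a $P$-distributed jump with probability $1-\psleep$, and the abelian property reduces fixation to the almost-sure finiteness of the odometer. The guiding heuristic is the single-particle calculation: a test particle released at the origin first sleeps at the origin (rather than elsewhere) with probability $\psleep/(1 - (1-\psleep)r_\lambda) =: \psleep \, g_\lambda$, where $r_\lambda$ is the return probability in the sleep-killed walk; as $\lambda \to 0$ this tends to $\Green[(\Graph, P)]\lambda$ in the transient case, since $g_\lambda \to \Green[(\Graph, P)]$.

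\textbf{Part (ii), transient walks.} Fix transient $(\Graph, P)$, $\epsilon > 0$, and $\density = (1-\epsilon)\Green[(\Graph, P)]\lambda$. I stabilize the ARW on an increasing sequence of finite sets $V_n \uparrow \Graph$. The two structural inputs are the standard odometer identity (relating the final configuration to the initial configuration and the divergence of the odometer flow) and the Wald-type identity $\E[s_x] = \lambda \E[\odom(x)]$ for the number $s_x$ of sleep instructions consumed at $x$, which follows from the i.i.d.\ structure of $\instr_x$ and optional stopping. Stabilization forces the final particle count at each interior site to be at most one, which combines with the previous inputs into a self-consistent bound of the form $\E[\odom_{V_n}(0)] \le C/(1 - \density/(\psleep \, g_\lambda))$, uniformly bounded in $n$ whenever $\density < \psleep \, g_\lambda$. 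Expanding $\psleep \, g_\lambda$ at small $\lambda$ recovers $\Green[(\Graph, P)]\lambda + o(\lambda)$. The matching upper bound on $\critdens$ (promoting $\Omega$ to $=$) follows by running the same identity just above the threshold, where it forces $\E[\odom_{V_n}(0)]$ to diverge and hence non-fixation.

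\textbf{Parts (i) and (iii).} For recurrent walks, $\Green[(\Graph, P)] = \infty$ and the threshold $\psleep \, g_\lambda$ is itself infinite; I replace it with a truncated version coming from stabilization in a ball $B(R)$. The same odometer argument, after inclusion of a boundary-leakage term for particles that exit $B(R)$, yields fixation for $\density < \psleep \, \Green[R](0, 0) \, (1 - o(1))$, where $\Green[R]$ is the finite-box Green's function. For (iii), any $R(\lambda) \to \infty$ growing sufficiently slowly with $\lambda^{-1}$ gives $\critdens = \omega(\lambda)$. For (i) on $\Z^2$, the classical asymptotic $\Green[R](0, 0) \sim (2/\pi)\log R$ combined with the choice $R = \lambda^{-c}$ for appropriate $c > 0$ gives $\critdens = \Omega(\lambda\log(\lambda^{-1}))$.

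\textbf{Main obstacle.} I expect the hardest step to be extracting the \emph{sharp} first-order constant $\Green[(\Graph, P)]$ in (ii). The non-interacting heuristic would give the threshold for free, but in ARW, sleeping particles are reactivated by subsequent visitors, inflating the odometer above the independent estimate. Proving that these reactivations contribute only $o(\lambda)$ corrections (rather than a constant-factor loss) at the stated density is where most of the technical effort will go, likely requiring a moment bound on the reactivation count or a coupling with an idealized ``single-chance'' process that realizes $\psleep \, g_\lambda$ exactly. In case (i), an added complication is controlling the boundary of $B(R(\lambda))$ with enough precision to retain the logarithmic factor.
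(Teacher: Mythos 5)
Your plan correctly identifies the target single-particle quantity $q = \psleep g_\lambda = \lambdasleep/(\lambdasleep + \lambdajump\pesc[\lambda])$, and your asymptotic expansions of it ($\Green[(\Graph, P)]\lambda+o(\lambda)$ for transient walks, $\omega(\lambda)$ for recurrent walks, $\Omega(\lambda\log\lambda^{-1})$ on $\Z^2$) match the paper's. But the route you propose to actually establish $\critdens \ge q$ has a genuine hole at its center, which you yourself flag: you want a ``self-consistent bound'' of the form $\E[\odom_{V_n}(0)] \le C/(1-\density/(\psleep g_\lambda))$ from a flow-balance identity plus the Wald identity $\E[s_x]=\psleep\E[\odom(x)]$, but you never derive it, and the ``main obstacle'' paragraph admits you do not know how to handle reactivations. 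That obstacle is not cosmetic. The Wald count $s_x$ includes sleep instructions wasted at sites with two or more active particles, and sleepers at $x$ that are later reactivated and eventually lost; accounting for those terms with the naive mass-balance gives the universal bound $\critdens\ge\lambdasleep$ (as in \cite{rs, StaufferTaggi18, Taggi2019}), \emph{not} the sharp threshold $q$. The extra factor of $g_\lambda\sim\Green[(\Graph,P)]$ over $\lambdasleep$ is precisely the excursion structure one gets by recognizing that each return of the toppled-out particle to the origin without sleeping grants a fresh independent sleep trial. The paper captures this with \thref{lem:indie-trials} and the geometric lower bound $\Ch\ge\Ch'\sim\Geom(\pesc[\lambda,\FiniteVertices])$ inside the weak--strong stabilization framework (\thref{lem:general-lb}), then feeds the conclusion into \thref{prop:fill}. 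Your odometer-level sketch has no analogous mechanism to extract the second factor, and ``combines into a self-consistent bound'' is where the proof would actually have to live.

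There is a second, more localized error: you claim the matching upper bound in part (ii) follows because at density above $\psleep g_\lambda$ the same identity ``forces $\E[\odom_{V_n}(0)]$ to diverge and hence non-fixation.'' Divergence of an expectation does not imply almost-sure divergence; \thref{lem:site-fixation} ties fixation to $\odom_\config(\origin)<\infty$ a.s., not to its mean. The paper's upper bound is instead \thref{cor:expected-visits}, $\critdens\le\Green[(\Graph,P)]\lambdasleep$, obtained from $\E[\Ch]\le\Green[(\Graph,P)]$ (\thref{lem:expected-chances}), Markov's inequality on $\sum_j b_j$, and conservation of mass via \thref{prop:unimodular-ub}. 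For parts (i) and (iii), your truncation idea with finite-volume Green's function is in the right spirit, but once you have the lower bound $\critdens\ge q$ the paper gets these for free by writing $q=\lambdasleep\sum_n\lambdajump^n\P(S_n=\origin)$ and applying the local CLT ($d=2$) or monotone convergence (recurrent/transient split); no separate boundary-leakage analysis is needed.
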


Again, the lower bound in dimension 2 matches the heuristic from \cite{AsselahAmine2024Tcdf} and closely matches their upper bound of $O(\lambda \log^a(\lambda^{-1}))$ for some positive $a > 0$.

\subsection{Prior results}\label{subsec:prior-results}

\subsubsection{Low sleep rate}\label{subsec:low-lambda}
 
As $\lambda \to 0$, as in the setting of \thref{thm:low-sleep-rate}, the rate at which $\critdens$ vanishes is at least $\Omega(\lambda)$, due to the following universal lower bound on the critical density. Let 
 $$
    \lambdasleep \coloneqq \frac{\lambda}{1 + \lambda}, \qquad \lambdajump \coloneqq 1 - \lambdasleep = \frac{1}{1 + \lambda},
$$ 
be the \emph{normalized} sleep and jump rates, scaled to sum to $1$. Then
  \begin{equation}\label{eq:univ-lb}
 	\critdens \ge \lambdasleep
 \end{equation}
is known to hold quite generally, having been first demonstrated for SSRW on $\Z$ \cite{rs}, then for any walk on an amenable graph \cite{StaufferTaggi18}, and finally SSRW on any vertex-transitive graph \cite{Taggi2019}. For the latter two results, Stauffer and Taggi developed a technique we refer to as \emph{strong stabilization via successive weak stabilization}, which we heavily exploit throughout this article.
 
 Regarding upper bounds, an $O(\lambda^{1/4})$ rate was achieved for transient walks \cite{StaufferTaggi18} and $O(\sqrt{\lambda})$ for transient walks on amenable graphs \cite{Taggi2019}, with the latter bound later improved to $\Theta(\lambda)$ and to include SSRW on any vertex-transitive graph \cite{meanfield}. For dimension 1, the correct order $\Theta(\sqrt{\lambda})$ was achieved \cite{AsselahRollaSchapira2022}. In \cite{AsselahAmine2024Tcdf}, the heuristic argument is provided that at the critical density, a particle should encounter on average one other particle before falling asleep, leading to the following predictions for the order of $\critdens$ as $\lambda \to 0$ for simple symmetric ARW:

$$\left\{
\begin{array}{ll}
	\Theta( \sqrt \lambda) & \text{if }d=1 \\
	\Theta(\lambda\log (\lambda^{-1}))& \text{if }d=2\\
	\Theta(\lambda)& \text{if }d\ge 3.\\
\end{array}
\right.$$
They demonstrated the $\Theta(\lambda)$ prediction for $d \ge 3$ and in dimension 2 attained a close upper bound of $O(\lambda \log^a (\lambda^{-1}))$ for some positive value $a$. Inspired by the heuristic argument, we prove a rigorous lower bound for the critical density in \thref{lem:general-lb} below. 

Together with \thref{thm:low-sleep-rate} then, the picture for low sleep rates is nearly complete, with only a small gap in dimension 2 between orders $\Theta(\lambda \log (\lambda^{-1}))$ and $\Theta(\lambda \log^a (\lambda^{-1}))$.

\subsubsection{High sleep rate}\label{subsec:high-lambda}

For the behavior as $\lambda \to \infty$, from \eqref{eq:univ-lb} we get that 
\begin{equation*}\label{eq:small-gap}
    1 - \lambdajump \le \critdens \le 1.
\end{equation*}
We thus get 
\begin{equation}\label{eq:crude-high-lambda}
    \critdens = 1 - O(\lambda^{-1})  
\end{equation}
as a general lower bound on the rate of convergence. 

Regarding upper bounds, \eqref{eq:crude-high-lambda} was shown to be tight for positive-speed SSRW, for instance on a non-amenable graph \cite{StaufferTaggi18}. In dimension 1, as stated above \cite{HoffmanChristopher2023APfA} obtained the exponential bound 
$$
 \critdens(\Z, \lambda) = 1 - e^{-O(\lambda)}.
$$
In \cite{AsselahAmine2024Tcdf}, the authors attained the upper bounds 
$$
    \critdens(\Z^2, \lambda) = 1 - \Omega(\lambda^{-1} \log^{-2}\! \lambda)
$$
and for $d \ge 3$,
$$
    \critdens(\Z^d, \lambda) = 1 - \Omega(\lambda^{-1}\log^{-1}\! \lambda),
$$
with their heuristic generally predicting $\critdens = 1 - \Theta(\lambda^{-1})$. And although it is not explicitly spelled out, it can be ascertained from \cite{Taggi2019} that for any transient walk $P$ on an amenable graph $G$, 
$$
    \critdens(G, P, \lambda) = 1 - \Omega\big(\lambda^{-\lfloor \Green[(G, P)] \rfloor}\big),
$$
and thus $\critdens(G,P,\lambda) = 1 - \Theta(\lambda^{-1})$ whenever $\Green[(G, P)] < 2$, including then SSRW in any transient dimension. Note that we extend this result to any transient walk, including on graphs where the Green's function is high.

With the addition of Theorems~\ref{thm:high-sleep-rate-d-1} and~\ref{thm:high-sleep-rate}, the only remaining gaps are between superpolynomial and exponential in dimension 1, and between $1 - \Theta(\lambda^{-1} \log^{-1}\! \lambda)$ and $1 - \Theta(\lambda^{-1} \log^{-2}\! \lambda)$ in dimension 2.

\subsection{Extensions}\label{subsec:extensions}

For more general vertex-transitive graphs $G = (V,E)$, as stated in \cite{rollaSurvey}, an analogously defined critical density $\critdens(G,P,\lambda)$ exists if $(G,P)$ is unimodular, meaning $P$ is invariant under a transitive unimodular subgroup $\Gamma \le \operatorname{Aut}(G)$, ensuring that what is known as the mass transport principle holds for $\Gamma$. This includes for example SSRW on Cayley graphs such as regular trees. 

Our results consist mostly of lower bounds, which all use \thref{prop:fill}. As stated in \cite[Section 6]{meanfield}, the proposition extends to any amenable graph or positive-speed walk, including SSRW on a non-amenable graph.

The remaining results are the upper bounds in the equality statements for transient walks of \thref{thm:high-sleep-rate}~\ref{thm:high-sleep-rate:transient} and \thref{thm:low-sleep-rate}~\ref{thm:low-sleep-rate:transient}, which go through for general unimodular walks. For instance, the upper bound from \thref{thm:high-sleep-rate}~\ref{thm:high-sleep-rate:transient} states that $1 - \critdens(\Graph, P, \lambda) \le \pesc\lambda^{-1} + o(\lambda^{-1})$ as $\lambda \to \infty$ for any transient walk $(\Graph,P)$.

\subsection{Organization}

Section~\ref{sec:sketch} discusses the proof techniques and provides a sketch of the arguments. Section~\ref{section:site-wise} describes the site-wise representation of ARW as well as some important properties. Section~\ref{sec:weak-stab} defines \emph{weak} and \emph{strong stabilization}, and Section~\ref{subsec:strong-stab-via-weak-stab} describes the strong stabilization via successive weak stabilization procedure, proving a few useful relations.  In Section~\ref{sec:proofs-low-lambda} we prove \thref{thm:low-sleep-rate} and in Section~\ref{sec:proofs-high-lambda} we prove \thref{thm:high-sleep-rate} and then \thref{thm:high-sleep-rate-d-1}.

\section{Proof Overview}\label{sec:sketch}

Each of our arguments builds on, in different ways, the weak--strong stabilization framework from Stauffer and Taggi. As described in Section~\ref{subsec:strong-stab-via-weak-stab}, the framework's power lies in the perspective that each weak stabilization (except the last) produces an independent $\Ber(\lambdasleep)$ \emph{sleep trial}, with the true stabilization leaving a sleeping particle at the origin if and only if at least one of the $\Ch$ sleep trials succeeds. We add to prior work by performing a more detailed, quantitative analysis of the random variable $\Ch$.

In \cite{StaufferTaggi18} and \cite{Taggi2019}, the celebrated one-dimensional lower bound $\critdens(\Z, \lambda) \ge \lambdasleep$ from \cite{rs} was proven generally by noting that $\Ch \ge 1$ whenever the odometer at the origin is non-negative, so w.h.p.\ at supercritical density as the graph expands. The other results using the technique are the upper bounds on $\critdens$ from \cite{StaufferTaggi18, Taggi2019, meanfield}, all following from the fact that $\Ch$ is bounded in expectation by the Green's function, or in the latter case, from the slightly stronger inequality $\E[(\Ch - 1)^+] \le \Green[d] - 1$. 

We obtain more refined bounds through closer study of $\Ch$, especially in the high-sleep-rate regime, where we also examine the configuration after each weak stabilization during the procedure. To get the superpolynomial bound in \thref{thm:high-sleep-rate-d-1}, we iteratively bootstrap the improved bounds on the final density of the stable configuration, simultaneously using them to increase the density of particles after each weak stabilization. To our knowledge, this is the first time a bootstrapping technique has been applied to bound $\critdens$. 

We believe further quantitative study of the weak--strong stabilization technique could prove fruitful. 

\subsection{Proof sketch}

The proofs are given in reverse order, building in complexity. 

For \thref{thm:low-sleep-rate} on low sleep rates, the lower bounds are all consequences of the following lemma. Let $q=q(\Z^d, P, \lambda)$ denote the probability that a single particle starting at $\origin$ falls asleep at $\origin$, i.e.\ it is located there after $\Geom(\lambdasleep) - 1$ steps.

\begin{lemma}\thlabel{lem:general-lb}
    For any $\lambda > 0$, $$\critdens(\Z^d, P, \lambda) \ge q(\Z^d, P, \lambda).$$
\end{lemma}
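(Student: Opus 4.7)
I would approach this via the weak--strong stabilization framework from Section~\ref{subsec:strong-stab-via-weak-stab}. The key identity it provides is that, for stabilization in a finite volume $V$, the probability that the origin holds a sleeping particle at the end can be written as
$\P(\text{sleeper at origin}) = 1 - \E[\lambdajump^{\Ch}]$,
where $\Ch$ is the number of independent $\Ber(\lambdasleep)$ sleep trials generated at the origin during the successive weak stabilization procedure. For an ergodic initial distribution at density $\density \ge \critdens$, this probability converges as $V \uparrow \Z^d$ to the limiting density of sleeping particles, which equals $\critdens$. Thus it suffices to show $1 - \E[\lambdajump^{\Ch}] \ge q$ in this limit.

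The key calibration is that for the single-particle initial configuration $\onepersite{\origin}$ the ARW reduces to a random walk with jump distribution $P$ that falls asleep at a $\Geom(\lambdasleep)$ time, and by the definition of $q$ the probability it sleeps at the origin is exactly $q$. Applying the identity above to this initial condition therefore pins down the corresponding chance count, call it $\Ch'$, through $\E[\lambdajump^{\Ch'}] = 1 - q$. The strategy is then to compare the general $\Ch$ to $\Ch'$ by a common-stack coupling. Concretely, I would take an ergodic supercritical $\initdistr$, and Palm-condition on $\config(\origin) \ge 1$ (or equivalently add a tagged particle at the origin without changing the density).

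Using the abelian property, I would fire the tagged origin particle first in the site-wise representation. Its solo trajectory faithfully reproduces the single-particle dynamics and consumes origin's instruction stack exactly as in the single-particle stabilization, so from the weak--strong perspective it generates at least $\Ch'$ many sleep trials at the origin. Any further activity in the full stabilization can only add further consumptions of the origin's stack and thus further sleep trials, giving $\Ch \ge \Ch'$ in the coupling. Since $x \mapsto \lambdajump^x$ is decreasing, $\E[\lambdajump^{\Ch}] \le \E[\lambdajump^{\Ch'}] = 1 - q$, hence $\P(\text{sleeper at origin}) \ge q$, and letting $V \uparrow \Z^d$ yields $\critdens \ge q$.

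The main obstacle is to check rigorously that every consumption of origin's stack by the tagged walker---and by any other particle in the full stabilization---corresponds to a genuine contribution to $\Ch$, even when a sleep instruction at the origin is ``forced to fail'' because several particles happen to be there at the moment of firing. Making the abelian bookkeeping precise, so that the single-particle chances embed as a lower bound into $\Ch$ in the presence of other particles (who may activate the tagged walker or themselves trigger origin visits), is the heart of the technical work; it should follow cleanly from the machinery set up in Section~\ref{subsec:strong-stab-via-weak-stab}.
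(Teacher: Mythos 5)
Your core mechanism---lower-bounding the chance count $\Ch$ by a geometric variable coming from single-particle excursions, then using the PGF identity from \thref{lem:indie-trials}---matches the paper's strategy. But the step where you pass from the finite-volume inequality to $\critdens \ge q$ does not go through. You assert that for a supercritical ergodic initial distribution the probability $\P(\origin \in \Stab[\FiniteVertices]\config)$ converges, as $\FiniteVertices \nearrow \Vertices$, to ``the limiting density of sleeping particles, which equals $\critdens$.'' That is precisely the self-organized-criticality picture that the paper flags as an open conjecture (see the Remark after \thref{prop:fill}); it is not a theorem. Conservation of mass (\thref{prop:mass-conservation}) requires fixation, i.e.\ $\density < \critdens$, and then gives a limit of $\density$, not $\critdens$. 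Palm-conditioning on $\config(\origin) \ge 1$ or adding a tagged particle at $\origin$ destroys translation invariance and so does not put you back in a regime where any mass-conservation identity pins down $\lim_{\FiniteVertices}\P(\origin\in\Stab[\FiniteVertices]\config)$. The tool the paper actually uses here is \thref{prop:fill} (from \cite{meanfield}), which gives the unconditional lower bound $\critdens \ge \liminf_{\FiniteVertices \nearrow \Vertices}\inf_{\config(\origin)=1}\P(\origin\in\Stab[\FiniteVertices]\config)$; combined with the finite-volume bound and $\pesc[\lambda,\FiniteVertices]\to\pesc[\lambda]$, this gives $\critdens \ge q$ directly, with no appeal to conservation of mass or to the behavior of supercritical systems.

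A secondary issue is that your comparison $\Ch \ge \Ch'$, where $\Ch'$ is the chance count of the single-particle ARW started from $\onepersite{\origin}$, rests on a configuration monotonicity of $\Ch$ that you acknowledge as ``the heart of the technical work'' but do not prove, and it is not obviously a consequence of the usual odometer monotonicity (the number of weak-stabilization iterations is not simply the odometer at $\origin$). The paper sidesteps this entirely: it defines $\Ch'$ intrinsically as the length of the initial run of post-jump-out excursions in which the walking particle returns to $\origin$ before executing a sleep instruction or exiting $\FiniteVertices$. Each such return forces the next iteration of the weak--strong procedure, so $\Ch' \le \Ch$ holds by construction, and the return events are i.i.d.\ across iterations (fresh instructions each time), giving $\Ch' \sim \Geom(\pesc[\lambda,\FiniteVertices])$ exactly. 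Note also that for finite $\FiniteVertices$ the single-particle probability is not exactly $q$ but rather $\lambdasleep/(\lambdasleep + \lambdajump\pesc[\lambda,\FiniteVertices]) \le q$; the paper handles this by taking $\FiniteVertices\nearrow\Vertices$ so that $\pesc[\lambda,\FiniteVertices]\to\pesc[\lambda]$, a limit you would need to make explicit as well.
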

\thref{lem:general-lb} follows from the observation that when a particle is jumped out of the origin after a sleep trial, if it returns without falling asleep, there will be another sleep trial. Thus, $\Ch$ dominates the number of excursions the particle completes without falling asleep. Furthermore, the bound is tight for transient walks, since the number of such excursions is close to the Green's function for low sleep rates.

For Theorems~\ref{thm:high-sleep-rate-d-1} and~\ref{thm:high-sleep-rate} on high sleep rates, the particle jumped out of $\origin$ can still perform a long random walk, now because it has a large carpet of sleeping particles around it that it can glide over due to the high critical density. For the one-dimensional result, we bound the probability the particle falls off the carpet by jointly considering the probability of an abnormally long excursion together with a sufficiently small carpet. We then iteratively bootstrap this improvement, also using it to increase the density of the carpet present after each successive weak stabilization, since the carpet may be part of the final stable configuration, depending on the outcomes of the sleep trials.


\section{The site-wise representation}\label{section:site-wise}

For a more detailed introduction, we direct the reader to \cite[Section 2]{rollaSurvey}. 

In the \emph{site-wise} or Diaconis-Fulton representation of ARW, there are random \emph{instructions} at each site, 
\[
    \instr = (\instr_x(k))_{x \in \Vertices, \, k \ge 0}.
\]
The instruction stacks $\instr_x = (\instr_x(k))_{k \ge 0}$ are independent and composed of sleep instructions $\s$ and jump instructions $\jumpinstr{y}$ for $y \in \Vertices$. The stack $\instr_x$ is i.i.d.\ with $\P(\instr_x(k) = \s) = \psleep$ and $\P(\instr_x(k) = \jumpinstr{y}) = \pjump \, P(x, y)$ for each $y \in \Vertices$. 

When stabilizing a finite set of vertices $\FiniteVertices \subset \Vertices$, killing particles that leave $\FiniteVertices$, we represent the state of the system by the particle \emph{configuration} and \emph{odometer}, respectively
\[
    \config \colon \FiniteVertices \to \N \cup \{\s\}, \qquad
    \odom \colon \FiniteVertices \to \N.
\]
The configuration value $\config(x)=\s$ indicates the presence of a single sleeping particle at site $x \in \FiniteVertices$ while $\config(x) \in \N$ indicates the number of active particles. The odometer value $\odom(x)$ counts how many instructions have been used at $x$. We call $x$ \emph{unstable} if it contains at least one active particle, and $\FiniteVertices$ unstable if some $x \in \FiniteVertices$ is. \emph{Toppling} an unstable site $x$ has the effect of updating the configuration according to instruction $\instr_x(\odom(x))$ and then increasing $\odom(x)$ by $1$. The \emph{abelian property} states that if one starts from state $(\config, \odom)$, and stabilizes by toppling unstable sites in any order until $\FiniteVertices$ is stable, the resulting state is always the same. It is composed of 
\[
    \Stab[\FiniteVertices](\config, \odom) \in \{0, \s\}^\FiniteVertices, \qquad \Odom[\FiniteVertices](\config, \odom) \in \N^\FiniteVertices,
\]
which we call the \emph{stable configuration} and \emph{stabilizing odometer} for $(\config, \odom)$, respectively.
\begin{lemma}[Abelian property]
    Fix finite $\FiniteVertices \subset \Vertices$, instructions $\instr$, configuration $\config$ on $\FiniteVertices$, and odometer $\odom$ on $\FiniteVertices$. Stabilizing from $(\config, \odom)$ in any order produces stable configuration $\Stab[\FiniteVertices](\config, \odom)$ and stabilizing odometer $\Odom[\FiniteVertices](\config, \odom)$.
\end{lemma}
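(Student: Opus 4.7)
The plan is to follow the classical Diaconis--Fulton \emph{least action principle} argument, adapted to ARW as in \cite{rs, rollaSurvey}. I would first formalize a \emph{legal toppling sequence} from $(\config, \odom)$ as a sequence of sites $\sigma = (x_1, \ldots, x_n)$ in $\FiniteVertices$ such that, writing $(\config_i, \odom_i)$ for the state produced after toppling $x_1, \ldots, x_i$ starting from $(\config_0, \odom_0) = (\config, \odom)$, each $x_i$ is unstable in $(\config_{i-1}, \odom_{i-1})$; $\sigma$ is \emph{complete} if $\config_n$ is stable in $\FiniteVertices$. Any ordered stabilization corresponds to a complete legal sequence, so the claim reduces to showing that any two complete legal sequences produce the same per-site toppling counts (hence the same odometer increment $\Odom[\FiniteVertices](\config, \odom) - \odom$). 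Equality of final configurations then follows automatically, since the value of $\config_n(x)$ is a deterministic function of $\config(x)$, the number of times $x$ is toppled, and the incoming instructions $\jumpinstr{x}$ read from neighboring stacks, all of which are determined by the toppling counts and the fixed array $\instr$.

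The main technical input is an \emph{exchange lemma}: if $(x_1, x_2, \ldots)$ is legal and $x_1 \ne x_2$, then $(x_2, x_1, x_3, \ldots)$ is also legal and produces the same state after the first two topplings. The crux is that toppling $x_1$ cannot turn $x_2$ from unstable to stable: the instruction $\instr_{x_1}(\odom_0(x_1))$ is either a sleep instruction $\s$, which acts only at $x_1$, or a jump $\jumpinstr{y}$, which either leaves $x_2$'s active count unchanged ($y \ne x_2$) or increases it by one, possibly reactivating a sleeping particle there ($y = x_2$). Since the instructions read at $x_1$ and $x_2$ depend only on $\odom_0(x_1)$ and $\odom_0(x_2)$, swapping the order is well defined and yields identical cumulative effects. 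With the exchange lemma in hand, I would prove the least action principle by induction on $|\sigma|$: for any legal $\sigma$ and any complete legal $\tau$, the toppling count at every site in $\sigma$ is at most that in $\tau$; the inductive step permutes $\tau$ using exchanges so that a toppling of $\sigma$'s next site appears at the front, reducing to a smaller instance. Applying this in both directions between two complete legal sequences forces their odometers, and thus their final configurations, to coincide.

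The main obstacle I anticipate is handling the sleep mechanism in the exchange step. One must carefully verify that (i) a sleep instruction executed at $x_1$ when it holds exactly one active particle, which stabilizes $x_1$, cannot alter $x_2$'s stability; (ii) an incoming jump to a site carrying a sleeping particle triggers the immediate reactivation consistently under both orderings; and (iii) the instruction read at each site after the swap is unchanged, since the odometer at $x_1$ and at $x_2$ has each advanced by exactly one in both orderings. Each of these is a direct check from the definition of toppling, but the combination of sleep, instantaneous reactivation, and the convention that a site with a sleeping particle is stable means that monotonicity of the unstable set must be tracked carefully rather than assumed as in the pure sandpile setting.
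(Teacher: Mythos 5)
The paper states the abelian property without proof, citing it as a known result (pointing to \cite[Section 2]{rollaSurvey}), so there is no in-text proof to compare against; I evaluate your proposal on its own. Your plan---the Diaconis--Fulton least-action argument---is the standard and correct route for ARW, and your ``crux'' observation (toppling $x_1$ cannot turn $x_2$ from unstable to stable) is exactly the right monotonicity fact. However, your exchange lemma is false as stated: from ``$(x_1, x_2, \ldots)$ is legal and $x_1 \ne x_2$'' you cannot conclude that $(x_2, x_1, x_3, \ldots)$ is legal, because $x_2$ may have been stable before $x_1$ toppled and become unstable only because $x_1$'s jump sent it a particle. Concretely, take $\config(x_1) = 1$, $\config(x_2) = 0$, $\odom \equiv 0$, and $\instr_{x_1}(0) = \jumpinstr{x_2}$; then $(x_1, x_2)$ is the beginning of a legal sequence, but $(x_2, \ldots)$ is not since $x_2$ is initially empty. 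Your justification in fact proves the converse of what the lemma requires: ``if $x_2$ was unstable before, it is still unstable after,'' not ``if $x_2$ is unstable after, it was unstable before.''

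The argument is salvageable, and you already have the ingredients. In the least-action induction you only ever exchange $(\ldots, y_i, x, \ldots) \to (\ldots, x, y_i, \ldots)$ where $x = \sigma_1$ is unstable in the \emph{initial} state and $y_1, \ldots, y_i$ are all $\ne x$, because you chose the \emph{first} occurrence of $x$ in $\tau$. Your monotonicity crux then guarantees $x$ remains unstable after toppling $y_1, \ldots, y_{i-1}$, so each swap involves two sites that are \emph{both} unstable before either is toppled; the correctly stated exchange lemma---if $x$ and $y$ are both unstable and $x \ne y$, then toppling them in either order is legal and yields the same state, with the same odometer---does hold and suffices. You should restate the exchange lemma with this stronger hypothesis and explicitly invoke the monotonicity fact to verify it inside the induction, rather than presenting the (false) unconditional version. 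Also worth noting: your claim that the final configuration is a deterministic function of the odometer needs the caveat that it applies only once stability is reached (intermediate sleep states can genuinely depend on ordering), which your argument uses implicitly; this is fine but should be stated.
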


Typically the odometer starts from zero, so we define $\Stab[\FiniteVertices]\config \coloneqq \Stab[\FiniteVertices](\config, 0_\FiniteVertices)$, and similarly we define $\Odom[\FiniteVertices]\config \coloneqq \Odom[\FiniteVertices](\config, 0_\FiniteVertices)$. We can stabilize from a state defined on a larger vertex set $W \supseteq \FiniteVertices$, that is from configuration $\config\colon W \to \N \cup \{\s\}$ and odometer $\odom \colon W \to \N$, by letting $\Stab[\FiniteVertices](\config, \odom) \coloneqq \Stab[\FiniteVertices](\config|_\FiniteVertices, \odom|_\FiniteVertices)$ and similarly for $\Odom[\FiniteVertices](\config, \odom)$. We let $x \in \Stab[\FiniteVertices]\config$ denote that $(\Stab[\FiniteVertices]\config)(x) = \s$.

For a configuration $\config\colon \Vertices \to \N \, \cup \, \{\s\}$ on all of $\Vertices$ and fixed instructions $\instr$, the stabilizing odometer is non-decreasing in $\FiniteVertices$. Therefore, we may define the possibly infinite odometer $\odom_{\config}\colon \Vertices \to \N \cup \{\infty\}$ for the system on the entire graph, 
$$
    \odom_{\config} = \Odom[\Vertices]\config \coloneqq \lim_{\FiniteVertices \nearrow \Vertices} \Odom[\FiniteVertices]\,\config.
$$
Above and throughout, when we take $\FiniteVertices \nearrow \Vertices$ it is over finite sets $\FiniteVertices \subset \Vertices$. 

It was demonstrated in \cite{AmirGurel-Gurevich10, rt} that particle fixation is equivalent to site and odometer fixation in the sense of \thref{lem:site-fixation} below. Recall the definitions of fixation and staying active from the introduction. Below and throughout, we always use a measure where the starting configuration $\config$ is independent of the instructions $\instr$. We say that a configuration $\config \in \N^{\Vertices}$ fixates if ARW with initial configuration $\config$ does.

\begin{lemma}[Equivalence of particle and site fixation]\thlabel{lem:site-fixation}
For any initial configuration $\config \in \N^{\Vertices}$ drawn from a translation-ergodic initial distribution,
$$
\P(\config\text{ fixates})=\P(m_{\config}(\origin)<\infty)\in \{0,1\}.
$$
\end{lemma}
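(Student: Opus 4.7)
The plan is to establish both assertions simultaneously via a single propagation principle: once $\odom_\config$ is infinite at one site, the instruction dynamics force it to be infinite everywhere. Combined with ergodicity this gives the $0$--$1$ law, and combined with a direct reading of the site-wise representation it gives the equivalence with particle fixation.

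First, consider the joint law of $(\config, \instr)$. The stacks $\instr$ are i.i.d.\ across sites, hence translation-invariant and ergodic, and by assumption $\config$ is translation-ergodic, so the joint law is ergodic under the diagonal shift. The event $A \coloneqq \{\odom_\config(x) = \infty \text{ for all } x \in \Vertices\}$ is translation-invariant and therefore has probability $0$ or $1$. The core claim to prove is that $A$ agrees almost surely with the a priori non-invariant event $\{\odom_\config(\origin) = \infty\}$. On the latter event, infinitely many instructions are executed at the origin, and the strong law of large numbers applied to the i.i.d.\ stack $\instr_\origin$ ensures that for every $y$ in the support of $P(\origin,\cdot)$ infinitely many jump instructions $\jumpinstr{y}$ are triggered, each dispatching a particle to $y$. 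Any site receiving infinitely many particles must itself be toppled infinitely often, so $\odom_\config(y) = \infty$. Iterating along paths and using that every vertex of $\Z^d$ is reachable from $\origin$ by steps of positive $P$-probability yields $\odom_\config(x) = \infty$ for all $x$. Hence $\{\odom_\config(\origin) = \infty\}$ coincides with $A$ up to a null set, so $\P(\odom_\config(\origin) < \infty) = 1 - \P(A) \in \{0, 1\}$.

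Next, identify $\{\odom_\config(\origin) < \infty\}$ with $\{\config \text{ fixates}\}$. If the particle system fixates, every particle eventually settles into permanent sleep, so each site is toppled only finitely often and $\odom_\config$ is finite everywhere, in particular at $\origin$. Conversely, on $\{\odom_\config(\origin) < \infty\}$ the propagation step above, read in the contrapositive, gives $\odom_\config(x) < \infty$ for every $x$, so only finitely many instructions are ever executed at any site, and every active particle eventually falls asleep for the last time.

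The main obstacle is making the propagation step rigorous. The delicate point is turning ``$\odom_\config(\origin) = \infty$'' into ``infinitely many particles are truly dispatched from $\origin$ to each $P$-neighbor $y$'': this requires the monotone convergence $\Odom[\FiniteVertices]\config(x) \nearrow \odom_\config(x)$ together with the abelian property to justify that the infinite-volume odometer records that many executed instructions under some valid toppling order, and then the strong law on the i.i.d.\ stack $\instr_\origin$ to control the jump instructions of each type. Once propagation is secured, translation invariance immediately yields the $0$--$1$ law and the equivalence with fixation follows directly from the site-wise representation.
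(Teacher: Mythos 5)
The paper itself does not prove this lemma; it is quoted with a citation to \cite{AmirGurel-Gurevich10, rt}, so there is no in-paper argument to compare yours against. As a blind attempt, your skeleton is the right high-level picture and roughly mirrors the cited references: ergodicity of the joint law of $(\config,\instr)$ gives a $0$--$1$ law for the translation-invariant event $A=\{\odom_\config\equiv\infty\}$; a propagation step via SLLN/Borel--Cantelli on the i.i.d.\ stack $\instr_\origin$ shows $\{\odom_\config(\origin)=\infty\}=A$ almost surely; and the site-wise representation is used to identify this with non-fixation. The propagation step, which you flag as the main obstacle, is actually the easier part: if a site receives $n$ arrivals during the stabilization of some finite $\FiniteVertices$, then, since each jump toppling ejects exactly one particle and the stable configuration retains at most one, mass balance forces at least $n-1$ jump topplings there; combined with the law of large numbers on $\instr_\origin$ this does push $\odom_\config=\infty$ outward along $P$-paths.

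The genuine gap is the step you call a ``direct reading,'' namely $\{\config\text{ fixates}\}=\{\odom_\config(\origin)<\infty\}$ a.s. That identification is where the substance of \cite{AmirGurel-Gurevich10, rt} lives, and your argument asserts it rather than proves it. Fixation concerns the continuous-time, infinite-volume particle dynamics, whereas $\odom_\config$ is the increasing limit of finite-volume abelian odometers with boundary killing; connecting them is not tautological. Your forward implication (``fixation $\Rightarrow$ each site toppled finitely $\Rightarrow \odom_\config(\origin)<\infty$'') is not immediate: even if every individual particle eventually settles permanently, infinitely many \emph{distinct} particles could in principle visit the origin, and you have not tied the number of continuous-time topplings at $\origin$ to the abelian limit $\odom_\config(\origin)$. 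Your reverse implication silently assumes that finiteness of the abelian odometer caps the topplings actually performed by the continuous-time infinite-volume process; this needs a monotonicity/least-action comparison showing the continuous-time trajectory is dominated by some legal ordering realizing $\Odom[\FiniteVertices]\config$ as $\FiniteVertices\nearrow\Vertices$. Both couplings are precisely the content of the cited works, and without them the proof is incomplete.
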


Therefore, when $\config \in \N^{\Z^d}$ fixates, there is a limiting stable configuration $$\Stab\, \config \coloneqq \lim_{\FiniteVertices \nearrow \Vertices}\Stab[\FiniteVertices]\config,$$
since for each $x \in \FiniteVertices$ it can be shown that $x \in \Stab[\FiniteVertices]\config$ if and only if the stabilizing odometer ends on a sleep instruction.

\emph{Conservation of mass}, proved in \cite{AmirGurel-Gurevich10}, states that the final stable configuration $\Stab\,\config$ has the same density as the starting configuration $\config$.

\begin{prop}[Conservation of mass]\thlabel{prop:mass-conservation}
    For any i.i.d.\ active initial configuration $\config \in \N^{\Z^d}$, if $\config \sim \initdistr$ fixates a.s., then $\P(\origin \in \Stab \,\config) = \density.$
\end{prop}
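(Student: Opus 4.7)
The plan is a mass-balance argument at the origin combined with the translation invariance of the site-wise dynamics. Since $\config$ fixates a.s., \thref{lem:site-fixation} gives $m_\config(x) < \infty$ a.s.\ at every $x \in \Vertices$, so the infinite-volume toppling procedure terminates at each site and yields a well-defined $\Stab\config \in \{0, \s\}^\Vertices$. The key observation is that each individual instruction (jump or sleep) conserves total mass at the sites it touches, with a sleeping particle counting as one unit of mass, so mass flowing in must balance mass flowing out at every site by the end of the stabilization.

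Concretely, let $J_{x \to y}$ denote the number of times the instruction $\jumpinstr{y}$ is executed at $x$ during the infinite-volume stabilization, and set $J^-(\origin) = \sum_y J_{\origin \to y}$ and $J^+(\origin) = \sum_y J_{y \to \origin}$. Tracking mass in and out of $\origin$ gives the a.s.\ identity
\begin{equation*}
    \onepersite{\origin \in \Stab\config} = \config(\origin) + J^+(\origin) - J^-(\origin).
\end{equation*}
Taking expectations reduces the claim to showing $\E[J^+(\origin)] = \E[J^-(\origin)]$. This follows from the translation invariance of the joint law of $(\config, \instr)$: the abelian stabilization is translation-equivariant, so the jump counts are a translation-invariant field and $\E[J_{y \to \origin}] = \E[J_{\origin \to -y}]$ for every $y$. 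Summing over the symmetric support of $P$ term-by-term produces the desired equality, and substituting back yields $\P(\origin \in \Stab\config) = \E[\config(\origin)] = \density$.

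The main obstacle is integrability. In the nearest-neighbor setting used throughout the paper, both $J^{\pm}(\origin)$ are sums of finitely many terms each bounded by the a.s.-finite odometer at $\origin$ or at a neighbor, so no subtlety arises. For general finite- or unbounded-range unimodular walks (relevant to the extensions in Section~\ref{subsec:extensions}), I would truncate to symmetric balls $B_N$, establish the identity $\E[\sum_{y \in B_N} J_{y \to \origin}] = \E[\sum_{y \in B_N} J_{\origin \to y}]$ using the symmetry $-B_N = B_N$, and pass $N \to \infty$ by monotone convergence. The balance identity itself forces $J^+(\origin) \le J^-(\origin) + 1$, so finiteness of either side implies finiteness of the other; in the remaining case where both are infinite the truncated expectations coincide for every $N$, and bounded convergence applied to $\onepersite{\origin \in \Stab\config} - \config(\origin)$ (which is a.s.\ bounded and equal to $\lim_N (J^+_N - J^-_N)$) delivers the conclusion.
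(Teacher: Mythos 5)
The paper does not give its own proof of this proposition; it simply cites \cite{AmirGurel-Gurevich10}, and the argument there (and in Rolla's survey) is indeed a mass-transport argument of the kind you sketch. Your mass-balance identity $\onepersite{\origin \in \Stab\config} = \config(\origin) + J^+(\origin) - J^-(\origin)$ and the translation-invariance/mass-transport identity $\E[J^+(\origin)] = \E[J^-(\origin)]$ are both correct and are the right ingredients. (One minor point: no symmetry of $P$ is needed; the re-indexing $y \mapsto -y$ handles it.)

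However, the integrability handling---which you correctly identify as the main obstacle---has a genuine gap, and it is the technical heart of the result. Your claim that ``no subtlety arises'' in the nearest-neighbor case because $J^{\pm}(\origin)$ are finite sums of terms bounded by the a.s.-finite odometer conflates almost-sure finiteness with finiteness of expectation. Nothing in the hypotheses gives $\E[m_\config(\origin)] < \infty$; this is not known to hold merely from a.s.\ fixation, so $\E[J^{\pm}(\origin)] = \infty$ cannot be ruled out even for nearest-neighbor walks. Your fallback for that case is also flawed: passing $\E[J_N^+ - J_N^-] \to \E[J^+ - J^-]$ by ``bounded convergence'' requires the \emph{sequence} $J_N^+ - J_N^-$ to be dominated by an integrable random variable, and the natural dominant $\max(J^+, J^-)$ is precisely the thing assumed non-integrable in that case; the integrability of the \emph{limit} $\onepersite{\origin \in \Stab\config} - \config(\origin)$ does not license the interchange. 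So the cancellation $\E[J^+] - \E[J^-] = 0$ is justified only under an unproved finiteness assumption. The cited proof avoids this by a more careful truncation/finite-volume argument; as written, yours does not close the case $\E[J^\pm(\origin)]=\infty$.
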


We then get the following method to upper bound the critical density. When we take the $\limsup$ of some quantity $f(\FiniteVertices) \in \R$, we do so over finite sets $\FiniteVertices \subset \Z$:
$$
    \limsup_{\FiniteVertices \nearrow \Vertices}f(\FiniteVertices) \coloneqq \lim_{\text{finite } \FiniteVertices \nearrow \Vertices} \; \sup_{\text{finite } \SubFiniteVertices 
    \supseteq \FiniteVertices} f(U).
$$

\begin{prop}\thlabel{prop:unimodular-ub}
    $$
        \critdens  \le \limsup_{\FiniteVertices \nearrow \Vertices} \; \sup_{\text{active } \configg \in \N^{\Vertices}} \P(\origin \in \Stab[\FiniteVertices] \configg).
    $$
\end{prop}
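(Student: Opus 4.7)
The plan is to prove that $\density \le \limsup_{\FiniteVertices \nearrow \Vertices} \sup_{\configg} \P(\origin \in \Stab[\FiniteVertices]\configg)$ for every $\density < \critdens$; since the right-hand side is independent of $\density$, taking $\density \uparrow \critdens$ then yields the claim. First I would fix any translation-ergodic initial distribution $\initdistr$ of density $\density$ supported on active configurations (e.g., i.i.d.\ $\Pois(\density)$). By the definition of $\critdens$, a sample $\config \sim \initdistr$ fixates almost surely, so by conservation of mass (\thref{prop:mass-conservation}),
$$
    \density = \P(\origin \in \Stab\,\config).
$$

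Next I would pass from the infinite stabilization to finite ones. Since $\Odom[\FiniteVertices]\config(\origin)$ is non-decreasing in $\FiniteVertices$ with finite limit $\odom_{\config}(\origin) < \infty$ on the a.s.\ event of fixation, and since $\Stab[\FiniteVertices]\config(\origin) \in \{0, \s\}$ is determined by the last instruction consumed at $\origin$ (as noted just after \thref{lem:site-fixation}), there is almost surely a finite $\FiniteVertices_0 = \FiniteVertices_0(\config, \instr)$ past which $\Stab[\FiniteVertices]\config(\origin) = \Stab\,\config(\origin)$. Bounded convergence then gives
$$
    \P(\origin \in \Stab[\FiniteVertices]\config) \;\xrightarrow[\FiniteVertices \nearrow \Vertices]{}\; \density.
$$

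Finally, for each fixed $\FiniteVertices$, the restriction $\config|_\FiniteVertices$ is independent of the instructions $\instr$, and $\Stab[\FiniteVertices]\config$ depends only on these two. Averaging over $\config|_\FiniteVertices$ and extracting a supremum yields
$$
    \P(\origin \in \Stab[\FiniteVertices]\config) \le \sup_{\text{active } \configg \in \N^\FiniteVertices} \P(\origin \in \Stab[\FiniteVertices]\configg).
$$
Combining the two displays and taking $\limsup_{\FiniteVertices}$ gives $\density \le \limsup_\FiniteVertices \sup_\configg \P(\origin \in \Stab[\FiniteVertices]\configg)$, and sending $\density \uparrow \critdens$ finishes the proof.

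The main delicate step is the almost-sure convergence $\Stab[\FiniteVertices]\config(\origin) \to \Stab\,\config(\origin)$ on the fixation event, which rests on the monotonicity of the odometer in $\FiniteVertices$ and the abelian-property characterization of the stable state via the final instruction used at $\origin$; once this is in hand, the remainder is a routine conditioning-plus-bounded-convergence calculation.
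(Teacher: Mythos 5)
Your argument is correct and follows exactly the paper's route: invoke conservation of mass at a subcritical density $\density < \critdens$, pass to the finite-volume limit $\P(\origin \in \Stab[\FiniteVertices]\config) \to \P(\origin \in \Stab\,\config) = \density$, bound by the supremum over fills, and send $\density \uparrow \critdens$. The only difference is that you spell out the justification for the limit step (odometer monotonicity plus the last-instruction characterization of $\Stab[\FiniteVertices]\config(\origin)$) that the paper's proof states without comment; one small note is that you should take $\config$ i.i.d.\ rather than merely translation-ergodic, since \thref{prop:mass-conservation} as stated assumes i.i.d.\ (your parenthetical example $\Pois(\density)$ already handles this).
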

\begin{proof}
    Take any subcritical density $\density < \critdens$ and let $\config \in \N^{\Vertices}$ be an i.i.d.\ active initial configuration with density $\density$. Then by conservation of mass, \begin{align*}
        \density = \P(\origin \in \Stab\, \config) &= \lim_{\FiniteVertices \nearrow \Vertices}\P(\origin \in \Stab[\FiniteVertices]\config) \\&\le \limsup_{\FiniteVertices \nearrow \Vertices}\sup_{\configg \in \N^{\Vertices}} \P(\origin \in \Stab[\FiniteVertices] \configg).
    \end{align*}
\end{proof}

We say a configuration $\config$ \emph{fills} a set of vertices $\SubFiniteVertices \subseteq \Vertices$ if it contains at least one active particle at each site of $\SubFiniteVertices$. The following lower bound on the critical density was proved in \cite[Lemma 10]{meanfield}:

\begin{prop}\thlabel{prop:fill}
    For any finite set $A \subset \Vertices$, 
    $$
        \critdens \ge \liminf_{\FiniteVertices \nearrow \Vertices}\inf_{\config \text{ fills } A} \P(\origin \in \Stab[\FiniteVertices] \config).
    $$
\end{prop}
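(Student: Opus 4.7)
I plan to argue by contradiction, combining a filling-based lower bound with a conservation-of-mass upper bound, mirroring the approach in \thref{prop:unimodular-ub} but in the reverse direction. Suppose for contradiction that $\critdens < c$, where $c$ denotes the right-hand side of the inequality. Pick $\density$ with $\critdens < \density < c$, and let $\config$ be an i.i.d.\ configuration of density $\density$. Since $\density > \critdens$, $\config$ stays active almost surely.

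The key device is the deterministic augmentation $\config^A \coloneqq \config + \onepersite{A}$, which always fills $A$ and differs from $\config$ only on the finite set $A$; this finite perturbation preserves fixation (via monotonicity of the odometer in the initial configuration together with \thref{lem:site-fixation}), so $\config^A$ also stays active. I would then run two parallel estimates on $\P(\origin \in \Stab[\FiniteVertices]\config^A)$. From below, applying the filling infimum to $\config^A$ (which deterministically fills $A$) yields
\[ \liminf_{\FiniteVertices \nearrow \Vertices}\P(\origin \in \Stab[\FiniteVertices]\config^A) \ge c. \]
From above, the finite-volume conservation of mass gives $\sum_{x \in \FiniteVertices}\P(x \in \Stab[\FiniteVertices]\config^A) \le \density|\FiniteVertices| + |A|$. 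The plan is to combine this with translation invariance of the underlying i.i.d.\ law of $\config$ and a van Hove averaging over centred boxes $\FiniteVertices = B_n$ to extract $\limsup_n \P(\origin \in \Stab[B_n]\config^A) \le \density$, yielding $c \le \density$ and contradicting $\density < c$.

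The main obstacle is the pointwise upper bound at $\origin$. The averaging naturally bounds the mean $|\FiniteVertices|^{-1}\sum_{x}\P(x \in \Stab[\FiniteVertices]\config^A)$, whereas I need a bound at the specific site $\origin$; moreover, $\config^A$ is not translation-invariant. The idea is to rewrite each term via translation invariance of $\config$ as $\P(x \in \Stab[\FiniteVertices]\config^A) = \P(\origin \in \Stab[\FiniteVertices - x](\config + \onepersite{A - x}))$, and then compare this with $\P(\origin \in \Stab[\FiniteVertices - x]\config)$, arguing that for all but a vanishing fraction of $x \in \FiniteVertices$ (those with $A - x$ far from $\origin$ inside $\FiniteVertices - x$) the finite-particle perturbation contributes only $o(1)$. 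This comparison — showing that adding $|A|$ particles at fixed sites far from $\origin$ affects $\P(\origin \in \Stab[\cdot]\cdot)$ only negligibly — is where the bulk of the technical work lies, and would likely be handled by an abelian-property coupling that decomposes the stabilization of $\config^A$ into the stabilization of $\config$ followed by the addition and stabilization of $|A|$ extra particles, then controls the latter by a hitting-type estimate that decays in the distance from the perturbation to $\origin$.
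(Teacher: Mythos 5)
The paper does not prove this proposition itself; it cites \cite[Lemma~10]{meanfield}, so the comparison below is with what that argument (and the Stauffer--Taggi machinery it descends from) actually does.

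Your overall scaffolding---contradiction, the filling lower bound at $\origin$, finite-volume mass conservation, translation invariance, van Hove averaging---is the right shape, but the load-bearing step is left as a hope, and I don't think the form you propose can be made to work. You reduce everything to a decay-of-correlations claim: adding $|A|$ particles at fixed sites far from $\origin$ changes $\P(\origin\in\Stab[\cdot]\,\cdot)$ by $o(1)$. This is the entire difficulty of the proposition, not a technical detail. Stabilizing $\config$ on a large box produces a configuration that is essentially at the critical density, and dropping a single active particle onto such a configuration triggers an avalanche whose range is not known to be tight; the ``hitting-type estimate that decays in the distance'' you invoke is precisely the kind of quantitative locality statement for ARW that is not available and would itself be a significant result. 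Moreover, your chain of comparisons actually needs two separate approximations you conflate: you must compare $\P\bigl(\origin\in\Stab[\FiniteVertices - x]\config^{A-x}\bigr)$ with $\P\bigl(\origin\in\Stab[\FiniteVertices - x]\config\bigr)$ to get the average bound, and then \emph{separately} compare $\P\bigl(\origin\in\Stab[B_n]\config^A\bigr)$ with the averaged quantity, where now the perturbation $A$ sits at bounded distance from $\origin$, so the ``far away'' heuristic does not even apply. As a sanity check, the most naive way to make your conditioning rigorous---bound $\P(x\in\Stab[\FiniteVertices]\config)$ below by restricting to the event $\{\config\text{ fills }A+x\}$ and applying translation invariance---only yields $\critdens \ge c\cdot\P(\config\text{ fills }A)$, with a strict loss factor, which is exactly the gap your decay estimate would need to close and which is a hint that the deterministic-augmentation route is fighting the problem.

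The mechanism the cited proof uses avoids this entirely. Rather than altering $\config$, one exploits abelianness: starting from a supercritical i.i.d.\ $\config$, legally topple in a chosen order until there is an active particle frozen at every site of $A$ (possible with probability $\to 1$ as $\FiniteVertices\nearrow\Vertices$ because the odometer diverges, and removing $|A|-1$ frozen particles does not destroy supercriticality). At that random stopping time one has an intermediate state $(\config_1,\odom_1)$ with $\config_1$ filling $A$, and the yet-unused instructions are i.i.d.\ and independent of $(\config_1,\odom_1)$; conditioning on this state, the remaining stabilization has exactly the law of $\Stab[\FiniteVertices]\configg$ for some deterministic $\configg$ filling $A$, so the filling infimum applies with no correction factor. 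Translation invariance and finite-volume mass conservation then close the argument just as you envisioned, but without any perturbation estimate. In short: the fix is not to force the filling and subtract its effect, but to let the supercritical dynamics produce the filling for you via the abelian property.
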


\begin{remark}
    We conjecture that the upper bound from \thref{prop:unimodular-ub} is in fact an equality, as is the lower bound from \thref{prop:fill} after taking $A \nearrow \Z^d$. This is in line with the self-organized criticality picture believed to hold for ARW \cite{LS24}.
\end{remark}

\section{Weak and strong stabilization}\label{sec:weak-stab}

As evidenced by Propositions~\ref{prop:unimodular-ub} and~\ref{prop:fill}, the critical density is tied to the quantity $\P(\origin \in \Stab[\FiniteVertices] \config)$ for different configurations $\config$ and large finite sets $\FiniteVertices \subset \Vertices$ containing the origin. The procedure of strong stabilization via successive weak stabilization described below---developed in \cite{StaufferTaggi18} and refined in \cite{Taggi2019, rollaSurvey}---helps us analyze the probability the origin is occupied after stabilization. 

We define weak and strong stabilization on any finite $\FiniteVertices \subset \Vertices$ with respect to some subset $\SubFiniteVertices \subseteq \FiniteVertices$. In weak stabilization, a single active particle is frozen at any site of $\SubFiniteVertices$ if ever one enters. More precisely, following the definition of the usual stabilization, we say a site $x$ is \emph{weakly unstable} with respect to $\SubFiniteVertices$ if either $x \in U$ and multiple active particles are situated there, or $x \notin U$ and $x$ is unstable in the usual sense. The abelian property still holds by the same proof \cite{StaufferTaggi18} (alternatively, one can see this by coupling weak stabilization with the usual stabilization where $\lambda=\infty$ on $\SubFiniteVertices$). By the abelian property then, there is a well-defined \emph{weakly stable configuration} $\weakconfig$ and \emph{weakly stabilizing odometer} $\weakodom$ with respect to $\SubFiniteVertices$, which one
obtains by toppling weakly unstable vertices starting at state $(\config, \odom)$ until a weakly stable configuration is reached.

In the strong stabilization with respect to $\SubFiniteVertices$, no sleeping particles are allowed to remain on $\SubFiniteVertices$. Instead they are always \emph{acceptably} toppled, where acceptably toppling a sleeping particle first activates it and then executes the next instruction at its site (toppling an active particle is distinguished as a \emph{legal} toppling). Formally, $x \in U$ is \emph{strongly unstable} if $x$ contains any active or sleeping particles, and again off of $U$ the notions of unstable and strongly unstable agree. The abelian property again holds \cite{rollaSurvey} (now it is as if $\lambda=0$ on $U$), and we define the \emph{strongly stable configuration} $\strongconfig$ and the \emph{strongly stabilizing odometer} $\strongodom$ with respect to $\SubFiniteVertices$.

Weakly or strongly stabilizing with respect to a vertex $x \in \FiniteVertices$ means with respect to $\{x\}$. Whenever we omit the reference to $\SubFiniteVertices$, it is implied we are weakly or strongly stabilizing with respect to $\origin$.

\subsection{Strong stabilization via successive weak stabilization}\label{subsec:strong-stab-via-weak-stab}

To check if $\origin \in \Stab[\FiniteVertices] \config$, one can weakly stabilize with respect to $\origin$, and if this ends with an active particle at $\origin$, then with probability $\lambdasleep$ it immediately falls asleep, completing the stabilization with the origin occupied. If instead it jumps out of $\origin$, one can weakly stabilize again, and if a particle reenters there is another sleep trial, allowing for another chance to have $\origin \in \Stab[\FiniteVertices]\config$. This iterative scheme was developed in \cite{StaufferTaggi18}, and in \cite{Taggi2019} the number of sleep trials was decoupled from the outcomes by strongly stabilizing, jumping the particle out of the origin regardless of whether it fell asleep. 

More precisely, we define an iterative three-step procedure to strongly stabilize with respect to $\origin$ via successive weak stabilizations, starting from active configuration $\config \in \N^\FiniteVertices$ and odometer $0_\FiniteVertices$ (we may also allow configurations $\config \in \N^{\Z^d}$ on all of $\Z^d$ by first restricting to $\FiniteVertices$). Begin with a pre-step by weakly stabilizing once, leading to the weakly stable configuration $\weakconfig_1 = \weakconfig_1(\config, \FiniteVertices)$ and odometer $\weakodom_1 = \weakodom_1(\config, \FiniteVertices)$. By the abelian property, $\weakconfig_1$ and $\weakodom_1$ are determined solely by $\instr$, independent of the order sites are toppled. If the origin is now empty, the strong stabilization is complete. Otherwise there is a single active particle at the origin, and we enter iteration $j=1$:

\begin{samepage}
\begin{enumerate}
    \item (The \emph{jump-out}) Acceptably topple the particle at $\origin$ until it jumps out.
    \item Weakly stabilize with respect to $\origin$, leading to state $(\weakconfig_{j+1}, \weakodom_{j+1})$. 
    \item If the origin is empty, stop. Otherwise, enter iteration $j+1$, repeating steps 1--3.
\end{enumerate}
\end{samepage}

We define the number of \emph{chances} $\Ch = \Ch(\config, \FiniteVertices) \ge 0$ as the number of iterations completed, a random variable measurable with respect to $\instr$. Note that $\weakconfig_{\Ch+1}$ is the strongly stable configuration. For $1 \le j \le \Ch$, let the $j^\text{th}$ sleep trial, $\sleepoutcome_j \in \{0,1\}$, denote whether the first instruction in the $j^\text{th}$ jump-out step was a sleep instruction. Letting $\psi_\Ch(s)$ denote the probability-generating function $\E[\Ch^s]$, we show that $\P(\origin \notin \Stab[\FiniteVertices]\config) = \psi_\Ch(\lambdajump)$. For a configuration $\configg$ with $\configg(\origin)=1$, let $T^{\s, \origin}\configg$ denote the configuration obtained from $\configg$ by setting $(T^{\s, \origin}\configg)(\origin) = \s$.

\begin{lemma}\thlabel{lem:indie-trials}
Let $\config \in \N^\FiniteVertices$ be an active configuration on some finite set $\FiniteVertices \subset \Z^d$ containing the origin. Conditioned on the weakly stable configurations $\vec{\config}^{\,\Weak} \coloneqq (\weakconfig_1, \ldots, \weakconfig_\Ch)$, the sleep trials $\vec{b} = (\sleepoutcome_1, \ldots, \sleepoutcome_\Ch)$ are independent $\Ber(\lambdasleep)$ random variables. If there is some minimal $1 \le j \le \Ch$ with $b_j = 1$, then $$\Stab[\FiniteVertices] \config = T^{\s, \origin}\weakconfig_j,$$ and $\origin \in \Stab[\FiniteVertices]\config$. Otherwise, $$\Stab[\FiniteVertices] \config = \weakconfig_{\Ch+1},$$ and $\origin \notin \Stab[\FiniteVertices]\config$. Therefore, 
\begin{equation}\label{eq:pgf}
    \P(\origin \in \Stab[\FiniteVertices]\config) = \P\bigg(\sum_{j=1}^\Ch \sleepoutcome_j \ge 1\bigg) = 1 - \psi_{\Ch}(\lambdajump).
\end{equation}
\end{lemma}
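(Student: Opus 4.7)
The plan is to use the abelian property to couple the real stabilization with the iterative procedure, while tracking which instructions at $\origin$ are consumed. The key structural observation is that the stack $\instr_\origin$ is naturally partitioned into \emph{chunks}, where each chunk is a block of zero or more consecutive sleep instructions followed by a single jump instruction $\jumpinstr{y}$; and each consumption of a chunk at $\origin$---whether during a weak-stabilization topple of $\origin$ (with $\ge 2$ active particles) or during a jump-out---has the net effect of moving one particle from $\origin$ to $y$, regardless of how many sleep instructions precede the jump. Writing the chunks as $C_i = (S_i, Y_i)$, the i.i.d.\ structure of $\instr_\origin$ gives that the $C_i$ are i.i.d.\ with $\P(S_i = k) = \lambdasleep^k \lambdajump$ and $Y_i \sim P(\origin, \cdot)$, jointly independent of $(\instr_x)_{x \ne \origin}$.

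For the conditional-independence claim I would observe that the procedure's dynamics depend only on the $Y_i$'s and the non-origin instructions, never on the $S_i$'s, since consuming chunk $i$ places a particle at $Y_i$ regardless of $S_i$. Thus $\vec{\config}^{\,\Weak}$, the count $\Ch$, and the (almost surely distinct) sequence $K_1, \ldots, K_\Ch$ of chunk indices consumed by the successive jump-outs are all measurable with respect to $\sigma((Y_i)_i, (\instr_x)_{x \ne \origin})$, hence independent of $(S_i)_i$. Since $b_j = \mathbf{1}\{S_{K_j} \ge 1\}$ and the $K_j$ are distinct, conditioning on $\vec{\config}^{\,\Weak}$ (which fixes $\Ch$) leaves the $b_j$ i.i.d.\ $\Ber(\lambdasleep)$.

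To identify $\Stab[\FiniteVertices]\config$, I would use the abelian property to stabilize in the order prescribed by the iterative procedure, but with the acceptable topples at $\origin$ replaced by legal ones. If there is a minimal $j$ with $b_j = 1$, then for $i < j$ the condition $b_i = 0$ means the first instruction of chunk $K_i$ is a jump, so the acceptable and legal topples at $\origin$ agree, and iterations $1, \ldots, j-1$ bring us to $\weakconfig_j$. The next legal topple at $\origin$ reads $\s$, the particle falls asleep, and because $\weakconfig_j$ is weakly stable---with a single active particle at $\origin$ and no active particles elsewhere---the configuration $T^{\s,\origin}\weakconfig_j$ has no unstable sites, so by the abelian property it equals $\Stab[\FiniteVertices]\config$. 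If instead every $b_j = 0$, the iterative procedure itself is a valid real stabilization and terminates at $\weakconfig_{\Ch+1}$ with $\origin$ empty. Equation \eqref{eq:pgf} then follows from $\P(\text{all } b_j = 0 \mid \Ch) = \lambdajump^\Ch$ after taking expectation.

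The main subtlety lies in justifying the chunk-consumption claim for weak-stabilization topples at $\origin$, which rests on the observation that executing a sleep instruction at a site holding multiple particles leaves the active count unchanged---the newly sleeping particle immediately reactivates---so successive weak topples advance through $\instr_\origin$ until the next jump instruction. Once this bookkeeping is in place, the rest is a direct application of the abelian property and the independence structure outlined above.
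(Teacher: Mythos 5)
Your proof is correct and rests on the same core observation as the paper's: the sleep instructions at the origin have no effect on the weakly stable configurations $\vec{\config}^{\,\Weak}$ produced by the procedure, so revealing them afterward gives independent $\Ber(\lambdasleep)$ trials. The difference is in the bookkeeping. The paper formalizes this via a coupling with ``neutralized'' instructions $\instr^\iota$, obtained by replacing each $\s$ at $\origin$ with a neutral $\neutral$, and noting that $\instr$ and $\instr^\iota$ generate the same $\vec{\config}^{\,\Weak}$, with the two coupled procedures differing only transiently mid--jump-out. You instead decompose $\instr_\origin$ into i.i.d.\ chunks $(S_i, Y_i)$ and argue that $\vec{\config}^{\,\Weak}$, $\Ch$, and the consumed chunk indices $K_1 < \cdots < K_\Ch$ are all measurable with respect to $\sigma\big((Y_i)_i,\,(\instr_x)_{x\ne\origin}\big)$, from which $b_j = \mathbf{1}\{S_{K_j}\ge 1\}$ gives the conditional independence directly. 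Your version makes the relevant $\sigma$-algebra split fully explicit, which is arguably tidier than the paper's coupling phrasing. One small point: you can assert that $K_1 < \cdots < K_\Ch$ deterministically rather than ``almost surely,'' since the odometer at $\origin$ strictly increases by at least one between consecutive jump-outs, so each jump-out begins a strictly later chunk. The identification of $\Stab[\FiniteVertices]\config$ via the abelian property---legal topples agree with acceptable ones precisely until the first successful sleep trial---matches the paper's argument, including the verification that $T^{\s,\origin}\weakconfig_j$ is stable because $\weakconfig_j$ is weakly stable with a single active particle at $\origin$ and none elsewhere.
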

    
\begin{proof}
    The idea is that the sleep instructions at the origin have no effect on $\vec{\config}^{\,\Weak}$.
    
    Let $\instr^\iota$ be obtained from $\instr$ by replacing every sleep instruction at the origin with the neutral instruction $\neutral$: that is $\instr^\iota_\origin(k) = \neutral$ whenever $\instr_\origin(k) = \s$, and all other instructions agree. Executing a neutral instruction does nothing to the configuration. It is easy to see that $\instr^\iota$ and $\instr$ produce the same weakly stable configurations: during any weak stabilization step, sleep instructions at the origin do not modify the configuration. Therefore, the configurations corresponding to each strong stabilization procedure may only differ in the middle of a jump-out step, temporarily disagreeing on whether the particle at the origin is asleep or awake. So $\vec{\config}^{\,\Weak}$ depends on $\instr$ only through $\instr^\iota$, and sleep instructions from $\instr$ at the origin indeed have no effect on $\vec{\config}^{\,\Weak}$. Thus, the $\sleepoutcome_j$'s conditioned on $\vec{\config}^{\,\Weak}$ are independent $\Ber(\lambdasleep)$ trials.

    Until there is a successful sleep trial, all topplings under instructions $\instr$ are legal, and therefore part of the stabilization of $\config$ on $\FiniteVertices$. If there is a first successful sleep trial $\sleepoutcome_j$, the stabilization is then complete, with $\Stab[\FiniteVertices] \config = T^{\s, \origin}\weakconfig_j$. Otherwise, the stabilization ends at $\Stab[\FiniteVertices] \config = \weakconfig_{\Ch + 1}$. Thus, indeed $\origin \in \Stab[\FiniteVertices] \config$ if and only if $\sum_{j=1}^\Ch \sleepoutcome_j \ge 1$. The second equation in \eqref{eq:pgf} is an easy general identity for probability-generating functions.
\end{proof}

The following bound on $\Ch$ was proved in \cite{Taggi2019}.

\begin{lemma}\thlabel{lem:expected-chances}
For any active configuration $\config$ and finite $\FiniteVertices \subset \Vertices$ containing the origin, 
\begin{equation*}
	\E[\Ch(\config, \FiniteVertices)] \le \Green[(\Graph, P)].
\end{equation*}
\end{lemma}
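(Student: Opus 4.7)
The plan is to use the abelian property of strong stabilization to reduce the bound on $\E[\Ch(\config, V)]$ to a random walk computation, identifying each iteration of the procedure with a visit to the origin of a single tracked random walker whose Green's function on $\Graph$ is exactly $\Green[(\Graph, P)]$.

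First, I would choose a specific toppling order for the strong stabilization. Within each iteration, after the jump-out step dispatches the particle at the origin, I would follow that particle through successive topplings at its current location using that site's next jump instruction. Because the instruction stacks are i.i.d., the tracked particle performs a random walk with jump distribution $P$ using fresh instructions at each visited site. Its excursion ends either when it returns to the origin (initiating the next iteration and contributing to $\Ch$) or when it jumps outside $V$ (killing the excursion). When the tracked walker arrives at a site occupied by another particle, active or sleeping, I would swap identities using particle indistinguishability, so that the tracked identity continues its walk while the encountered particle is deferred to a queue. This reordering is legal by the abelian property, since topplings at distinct sites commute.

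Next, I would observe that under this convention, each contribution to $\Ch$ corresponds to a return of the single tracked walker to the origin. The tracked walker is a random walk on $\Graph$ with transition distribution $P$, started at the origin and killed upon leaving $V$. Its expected number of visits to the origin is stochastically dominated by the corresponding quantity for the walk on all of $\Graph$, whose expected total number of visits to the origin (including the starting visit) is exactly $\Green[(\Graph, P)]$. This yields $\E[\Ch(\config, V)] \le \Green[(\Graph, P)]$.

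The main obstacle lies in justifying the identity-swapping step: one must verify that the deferred particles in the queue cannot contribute additional arrivals at the origin beyond those counted by the tracked walker's excursions. The key point, established in \cite{Taggi2019}, is that under the abelian reordering combined with identity swapping, every arrival at the origin during the strong stabilization is canonically attributable to a single step of the tracked walker, so the total count $\Ch$ matches the visit count of this one walker. Formalizing this requires an inductive bookkeeping that matches each toppling at the origin with a unique step of the tracked walker's trajectory.
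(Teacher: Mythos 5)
The paper does not prove this lemma---it is quoted from \cite{Taggi2019} without proof---so there is no in-paper argument to compare against. Judged on its own terms, your high-level plan (follow a tagged walker via identity swaps, then bound its returns to $\origin$ by $\Green[(\Graph, P)]$) does reflect the spirit of the cited argument, but as written there is a real gap at exactly the point you flag and then defer to the citation.

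The problem is the sleep instruction. You write that the tracked walker uses ``that site's next jump instruction'' at each location, and that its excursion ends either back at $\origin$ or outside $\FiniteVertices$. But you cannot legally skip a sleep instruction. If the tracked walker is alone at some $z \ne \origin$ and the next instruction there is $\s$, it falls asleep and is no longer topplable during that weak stabilization step. Its excursion has a third possible ending: it dies in place. Meanwhile the deferred particles in your queue---sleepers the walker woke en route, now active---keep toppling, and one of them can reach $\origin$ and launch a new iteration. That increments $\Ch$ without any corresponding return of the tracked walker, so the assertion that ``every arrival at the origin during the strong stabilization is canonically attributable to a single step of the tracked walker, so $\Ch$ matches the visit count of this one walker'' is precisely what needs proof, and under the naive priority-to-the-tracker toppling order it is false. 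Resolving this---via a more delicate toppling order, a re-tagging scheme that still yields a genuine random-walk trajectory, or a counting argument over jump instructions at $\origin$ rather than over a single particle identity---is the actual content of the lemma, and your sketch leaves it entirely to the citation.
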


We obtain the following as a corollary.

\begin{corollary}\thlabel{cor:expected-visits} $\critdens(\Graph, P, \lambda) \le \Green[(\Graph, P)]\lambdasleep$.
\end{corollary}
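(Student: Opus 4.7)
The plan is to combine the three results already available: the upper bound on $\critdens$ from Proposition~\ref{prop:unimodular-ub}, the probability-generating-function identity $\P(\origin \in \Stab[\FiniteVertices]\config) = 1 - \psi_\Ch(\lambdajump)$ from Lemma~\ref{lem:indie-trials}, and the bound $\E[\Ch] \le \Green[(\Graph, P)]$ from Lemma~\ref{lem:expected-chances}. The single analytic ingredient needed is Bernoulli's inequality $(1-\lambdasleep)^k \ge 1 - k\lambdasleep$ for integer $k \ge 0$, which rewrites as
\[
1 - \lambdajump^{k} \le k\lambdasleep.
\]

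First I would fix an arbitrary active configuration $\configg \in \N^\FiniteVertices$ on a finite $\FiniteVertices \subset \Vertices$ containing the origin. Applying the inequality above pointwise to $\Ch(\configg, \FiniteVertices)$ and then taking expectation yields
\[
1 - \psi_\Ch(\lambdajump) = \E\bigl[1 - \lambdajump^{\Ch}\bigr] \le \lambdasleep\, \E[\Ch] \le \lambdasleep \Green[(\Graph, P)].
\]
By Lemma~\ref{lem:indie-trials}, the left-hand side equals $\P(\origin \in \Stab[\FiniteVertices]\configg)$, so the bound holds uniformly in $\configg$ and $\FiniteVertices$.

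Finally, I would feed this uniform bound into Proposition~\ref{prop:unimodular-ub} to conclude
\[
\critdens(\Graph, P, \lambda) \le \limsup_{\FiniteVertices \nearrow \Vertices} \sup_{\configg} \P(\origin \in \Stab[\FiniteVertices] \configg) \le \lambdasleep \Green[(\Graph, P)],
\]
which is the desired corollary. There is really no obstacle here — the argument is a direct chaining of earlier results, with Bernoulli's inequality as the only new observation. The only thing worth double-checking is that the inequality $1 - \lambdajump^{\Ch} \le \lambdasleep \Ch$ genuinely holds for all nonnegative integer values of $\Ch$ (including $\Ch=0$, where both sides are $0$), which is immediate.
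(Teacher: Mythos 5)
Your proof is correct and is essentially the same as the paper's: the paper bounds $\P(\sum_{j=1}^{\Ch} b_j \ge 1) \le \E[\Ch]\lambdasleep$ via Markov's inequality on the conditionally independent Bernoulli sum, which is algebraically identical to your Bernoulli-inequality estimate $1-\lambdajump^{\Ch} \le \Ch\lambdasleep$ applied to the probability-generating function. Both then invoke \thref{lem:expected-chances} and \thref{prop:unimodular-ub} in the same way.
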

\begin{proof}
    For any finite $\FiniteVertices$ containing $\origin$ and active configuration $\config \in \N^{{\Vertices}}$, 

    \begin{align*}
        \P(\origin \in \Stab[\FiniteVertices]\config) 
            &= \P\bigg(\sum_{j=1}^\Ch b_i \ge 1\bigg) && \text{(by equation \eqref{eq:pgf})} \\
            &\le \E\sum_{j=1}^\Ch b_i && \text{(by Markov's inequality)}\\
            &= \E[\Ch]\lambdasleep && \text{(by \thref{lem:indie-trials})}\\
            &\le  \Green[(\Graph, P)]\lambdasleep. && \text{(by \thref{lem:expected-chances})}
    \end{align*}

    By \thref{prop:unimodular-ub}, this upper bounds the critical density.
\end{proof}

\section{Proofs in the low-sleep-rate regime}\label{sec:proofs-low-lambda}

In this section we prove \thref{thm:low-sleep-rate}, beginning with \thref{lem:general-lb}.

\subsection{Proof of \thref{lem:general-lb}} Recall that $q = q(\Z^d, P, \lambda)$ denotes the probability a random walk from the origin falls asleep at the origin. To quantify $q$, modify the walk so it never falls asleep at the origin, and let $\pesc[\lambda]$ be the probability it falls asleep before visiting the origin for a second time. By the memorylessness of the walk, the number of origin visits of the modified walk has $\Geom(\pesc[\lambda])$ distribution. Focusing back on the unmodified walk, and using the independence of the sleep clock and the walk's trajectory, \begin{equation}\label{eq:q}
	q = 1 - \psi_{\Geom(\pesc[\lambda])}(\lambdajump) = \frac{\lambdasleep}{\lambdasleep + \lambdajump\pesc[\lambda]}.
\end{equation}
	
Now take finite $\FiniteVertices \subset \Z^d$ containing the origin, and let $\config$ be any configuration with $\config(\origin) = 1$, and therefore $\Ch = \Ch(\config, \FiniteVertices) \ge 1$. After the first jump-out, topple the particle until it either returns to the origin, falls asleep, or exits $\FiniteVertices$. Let $\ReturnEv{1}$ be the event that it returns to the origin before falling asleep or leaving $\FiniteVertices$, and thus $\Ch \ge 2$. Let $$\pesc[\lambda, \FiniteVertices] \coloneqq 1 - \P(\ReturnEv{1}),$$ which does not depend on $\config$. Similarly, on $\Ch \ge j$, let $\ReturnEv{j} \subseteq \{\Ch \ge j\}$ denote the event that after the $j^\text{th}$ jump-out, the particle returns to $\origin$ before falling asleep or leaving $\FiniteVertices$, ensuring that $\Ch \ge j + 1$. Define $\Ch'$ to be minimal so that $\ReturnEv{\Ch'}$ fails, so $\Ch' \le \Ch$, and
$$
    \Ch' \sim \Geom\big(\pesc[\lambda, \FiniteVertices]\big).
$$
Since $\lambdajump \in (0,1)$ we have $\psi_{\Ch'}(\lambdajump) \ge \psi_{\Ch}(\lambdajump)$. Therefore, by equation \eqref{eq:pgf}, 
\begin{align}
	\nonumber \P(\origin \in \Stab[\FiniteVertices] \config) = 1 - \psi_{\Ch}(\lambdajump)
	\nonumber &\ge 1 - \psi_{\Ch'}(\lambdajump)\\
	\label{eq:q-V}&= \frac{\lambdasleep}{\lambdasleep + \lambdajump\pesc[\lambda, \FiniteVertices]}.
\end{align}

Clearly, \begin{equation}\label{eq:preturn-limit}
	\lim_{\FiniteVertices \nearrow \Vertices} \pesc[\lambda, \FiniteVertices] = \pesc[\lambda].
\end{equation}

Therefore, 

\begin{align*}
	\critdens  &\ge \liminf_{\FiniteVertices \nearrow \Vertices}\inf_{\config(\origin)=1} \P(\origin \in \Stab[\FiniteVertices] \config) && \text{(by \thref{prop:fill})}\\
	&\ge \frac{\lambdasleep}{\lambdasleep + \lambdajump\pesc[\lambda]} && \text{(by equations \eqref{eq:q-V} and \eqref{eq:preturn-limit})}\\
	&= q, && \text{(by equation \eqref{eq:q})}
\end{align*}
completing the proof.

\subsection{Proof of \thref{thm:low-sleep-rate}}
    For any $(Z^d, P)$, let $S_n$ be a random walk started at the origin under $P$. Then as $\lambda \searrow 0$, using that $\lambdasleep = \lambda/(1 + \lambda) = \lambda(1 + o(1))$, 
	 \begin{align}
	 	\nonumber\critdens(\Graph, P, \lambda) &\ge q(\Graph, P, \lambda) && \text{(by \thref{lem:general-lb})}\\
	 	\nonumber&= \sum_{n=0}^\infty \lambdasleep \lambdajump^{n}\P(S_n = \origin)\\
	 	\label{eq:rw-low-sleep-rate}&=\lambda  \big(1+ o(1)\big)\sum_{n=0}^\infty \lambdajump^{n}\P(S_n = \origin).
	\end{align}

    We prove claims~\ref{thm:low-sleep-rate:Z2}--\ref{thm:low-sleep-rate:recurrent}.

\begin{enumerate}[label=(\roman*)]

    \item
	
	For SSRW on $\Z^2$, choose $C_2 > 0$ such that for $n \ge 1$, $\P(S_{2n} = \origin) \ge C_2/n$. Then,
	\begin{equation}\label{eq:low-lambda-Z2}
		\sum_{n=0}^\infty\lambdajump^{n}\P(S_n = \origin) 
            \ge  \sum_{n = 1}^\infty \lambdajump^{2n}\frac{C_2}{n}
            = C_2\log \bigg(\frac{1}{1 - \lambdajump^2}\bigg).
	\end{equation}
    Also, 
    $$1 - \lambdajump^2 = (1 + \lambdajump)\lambdasleep < 2\lambdasleep < 2 \lambda.$$
	Combining this with equations \eqref{eq:rw-low-sleep-rate} and \eqref{eq:low-lambda-Z2} we get
    \begin{align*}
        \critdens(\Z^2, \lambda) \ge \lambda(1 + o(1))C_2\log\bigg(\frac{1}{2 \lambda}\bigg) = \Theta(\lambda\log \lambda^{-1}).
    \end{align*}

    \item[(iii)]
    
	By the monotone convergence theorem, as $\lambda \searrow 0$ and thus $\lambdajump \nearrow 1$,
	\begin{equation}\label{eq:mct}
		\sum_{n=0}^\infty \lambdajump^{n}\P(S_n = \origin) \nearrow \sum_{n=0}^\infty \P(S_n = \origin) = \Green[(\Graph, P)].
	\end{equation}
	If $(\Graph, P)$ is recurrent then $\Green[(\Graph, P)] = \infty$, so equations \eqref{eq:rw-low-sleep-rate} and \eqref{eq:mct} yield $\critdens(G, P, \lambda) = \omega(\lambda)$.
	
    \vspace{\baselineskip}

    \item[(ii)]

    For transient $(\Graph, P)$, $\Green[(\Graph, P)] < \infty$. So \eqref{eq:rw-low-sleep-rate} and \eqref{eq:mct} give that
	\begin{align*}
		\critdens(G, P, \lambda) 
			&\ge \lambda\big(1 + o(1)\big)\big(\Green[(\Graph, P)] + o(1)\big) \\
			&= \Green[(\Graph, P)] \lambda + o(\lambda).
	\end{align*}
    The upper bound follows from \thref{cor:expected-visits}.

\end{enumerate}

\begin{remark}
    Arguing similarly as for $\critdens(\Z^2, \lambda)$, one can show $\critdens(\Z, \lambda) = \Omega(\sqrt{\lambda})$. This bound is already known to be tight, but \thref{lem:general-lb} admits a relatively elementary proof of the fact.
\end{remark}

\section{Proofs in the high-sleep-rate regime}\label{sec:proofs-high-lambda}

As $\lambda \to \infty$, we treat $\lambdajump$ like $\lambda^{-1}$ using the following bounds:
\begin{align}
    \lambdajump &< \lambda^{-1} \label{eq:lambdaj-ub},\\
    \lambdajump &= \lambda^{-1}(1 + o(1)). \label{eq:lambdaj-order}
\end{align}

\subsection{Proof of \thref{thm:high-sleep-rate}}

As with low sleep rates, we stochastically lower bound the number of sleep trials by a geometric random variable. For low $\lambda$, the extra sleep trial was guaranteed by the event that the particle jumped out of the origin returned without executing a sleep instruction. For high $\lambda$, the additional trial occurs if the the particle returns without visiting an empty site, so again it has no chance to fall asleep.

We fix a large ball $B_r$, and define an iterative \emph{carpet procedure}, where each step begins with an active particle on $\origin$ and a single active or sleeping particle at each site of $B_r \setminus \{\origin\}$. The procedure continues, ensuring an additional sleep trial, as long as all active particles on $B_r \setminus \{\origin\}$ immediately fall asleep, and the particle jumped out of the origin subsequently returns without exiting $B_r$. 

The proof of \thref{thm:high-sleep-rate-d-1} will use a similar but more delicate analysis.

\subsubsection{The carpet procedure}
	
	Fix $r \ge 0$. Take any finite $\FiniteVertices \subset \Vertices$ containing $B_r$ as well as some active configuration $\config \colon \Vertices \to \N$ that fills $B_r$ with active particles. We define a procedure which partially executes the strong stabilization of $\config$ with respect to $\origin$. We then define $\Ch' = \Ch'(\config, \FiniteVertices) \ge 1$ to be the number of partially completed iterations, which will serve as a lower bound for $\Ch$.
	
	Before entering the first iteration of the procedure, weakly stabilize with respect to $B_r$, leaving $B_r$ filled. Now execute the following steps until the procedure quits, starting at iteration $j=1$. Each iteration will begin with a configuration that is stable outside of $B_r$, has an active particle at $\origin$, and a single active or sleeping particle at each other site of $B_r$. Step 1 takes place during the $j^\text{th}$ weak stabilization, and step 2 during the $(j+1)^\text{st}$.
	
	\begin{enumerate}
		\item (Establish the carpet) Topple each active particle on $B_r \setminus \{\origin\}$ once. If any particle jumps, quit the procedure. Otherwise the $j^\text{th}$ weak stabilization is complete with no empty sites on $B_r$. 
		\item (Attempt a return) Jump the particle out of $\origin$, and then legally topple it until it returns to $\origin$ or escapes $B_r$. If it escapes, end the procedure. If it returns, which ensures $\Ch=\Ch(\config, \FiniteVertices) \ge j+1$, go back to step $1$ and complete iteration $j+1$.
	\end{enumerate}
	
	Letting $\Ch'$ be the iteration number when the procedure ends, we have $\Ch' \le \Ch$ by the observation in step 2, and since $\Ch \ge 1$ as $\config$ fills $\origin \in B_r$.
    
    In step 1, there are at most $|B_r| - 1$ particles that are toppled, so by a union bound its failure probability is at most $(|B_r| - 1)\lambdajump < (|B_r| - 1)\lambda^{-1}$. And step 2 always fails with the same probability, which is independent of $\config$, $\FiniteVertices$, and $\lambda$. We denote it by $\pesc[r]$. Therefore, with 
	\begin{equation*}
		\pesc[r, \lambda] \coloneqq  \min\!\big(1, (|B_r| - 1)\lambda^{-1} + \pesc[r]\big),
	\end{equation*}
	we have 
	 \begin{equation}\label{eq:geom-lb}
	   \Geom\big(\pesc[r, \lambda]\big) \preceq_{s.t.} \Ch' \preceq_{s.t.} \Geom\big(\pesc[r]\big). 
	\end{equation}
    
    \subsubsection{Relating the carpet procedure to the critical density}
    
    For any configuration $\config$ filling $B_r$ and finite $\FiniteVertices \supseteq B_r$, we have 
	\begin{align*}
		\P(\origin \notin \Stab[\FiniteVertices]\config) 
		&= \psi_\Ch(\lambdajump) && \text{(by equation \eqref{eq:pgf})}\notag \\
		&\le \psi_{\Ch'}(\lambdajump) && \text{(since $\Ch \ge \Ch'$)}\notag \\
		&\le \P(\Ch'=1)\lambdajump + \lambdajump^2 &&\text{(since $\Ch' \ge 1$)}\notag \\
		&< \pesc[r, \lambda]\lambda^{-1} + \lambda^{-2} &&\text{(by equations \eqref{eq:lambdaj-ub} and \eqref{eq:geom-lb})} \notag \\
        &\le \pesc[r]\lambda^{-1} + |B_r|\lambda^{-2}.\label{eq:first-order-ch-lb}
	\end{align*}
    
    Combining this with the following rearrangement of \thref{prop:fill}, 
    \begin{equation}\label{eq:prop-fill-inverted}
        1 - \critdens \le \limsup_{\FiniteVertices \nearrow \Vertices}\sup_{\config \text{ fills } B_r} \P(\origin \notin \Stab[\FiniteVertices] \config),
    \end{equation}
    we get 
	\begin{equation}\label{eq:high-lambda-first-order}
		1 - \critdens \le \pesc[r]\lambda^{-1} + |B_r|\lambda^{-2} = \big(\pesc[r] + o(1)\big)\lambda^{-1}.
	\end{equation}
    
    Letting $r \nearrow \infty$ and thus $\pesc[r] \searrow \pesc$, we have
    
    \begin{equation}\label{eq:pesc-first-order}
		1 - \critdens \le \big(\pesc + o(1)\big) \lambda^{-1}.
	\end{equation}

    \subsubsection{Recurrent walks}

    \thref{thm:high-sleep-rate}~\ref{thm:high-sleep-rate:recurrent} follows immediately from \eqref{eq:pesc-first-order}, as $\pesc=0$ for recurrent walks.

    \subsubsection{Dimension 2}
	
	Take $r= \sqrt[]{\lambda/\log \lambda}$. For the particle in the carpet procedure to escape $B_r$, it must take at least $r$ steps before returning to the origin, and so, letting $\tau_\origin^+$ denote the first return time for SSRW on $\Z^2$, \begin{equation}\label{eq:tau}
		\pesc[r] \le \P(\tau_\origin^+ > r).
	\end{equation}
	A classical result states that in dimension 2, $\tau_\origin^+$ has logarithmically decaying tail \cite{erdos}. Therefore, by \eqref{eq:high-lambda-first-order} and \eqref{eq:tau}, 
	\begin{align*}
		1 - \critdens(\Z^2, \lambda) &\le  \P(\tau_\origin^+ \ge r)\,\lambda^{-1} + |B_r|\lambda^{-2}\\
		&= O( \log^{-1}\! r)\lambda^{-1} + O((\lambda\log \lambda)^{-1})\\ 
		&= O((\lambda\log \lambda)^{-1}),
	\end{align*}
	establishing \thref{thm:high-sleep-rate}~\ref{thm:high-sleep-rate:Z2}.

    \subsubsection{Transient walks}
    From Equation \eqref{eq:pesc-first-order} we have
    \begin{equation*}
        \critdens \ge 1 - \pesc \lambda^{-1} + o(\lambda^{-1}),
    \end{equation*}
    the lower bound of \thref{thm:high-sleep-rate}~\ref{thm:high-sleep-rate:recurrent}. The outline of the argument for the upper bound is as follows. For $\config$ that fills $B_r$, we show that $\Ch' = \Ch'(\config, \FiniteVertices)$ approximates $\Ch = \Ch(\config, \FiniteVertices)$ well when $r$ is large. Then $\P(\Ch=1)$ is close to $\pesc$, so $\P(\origin \notin \Stab[\FiniteVertices] \config)$ is not much less than $\pesc\lambda^{-1}$. Now an i.i.d.\ configuration $\configg$ with density larger than $1 - \pesc\lambda^{-1}$ likely fills $B_r$, and so loses mass after stabilization. Therefore, $1 - \pesc \lambda^{-1}$ must be supercritical, as otherwise conservation of mass would be violated.

    \begin{proof}[Proof of \thref{thm:high-sleep-rate}~\ref{thm:high-sleep-rate:transient}]
    
    For any $\config$ filling $B_r$ and finite $\FiniteVertices \supseteq B_r$, we have 
    \begin{align}
		\P(\Ch\neq \Ch') 
		&= \P(\Ch - \Ch' \ge 1) \notag \\
		&\le \E[\Ch - \Ch'] && \text{(by Markov's inequality)} \notag \\
		&\le \frac{1}{\pesc} - \frac{1}{\pesc[r, \lambda]}. && \text{(by  \thref{lem:expected-chances} and equation \eqref{eq:geom-lb})}  \label{eq:ch'-approx-ch}
	\end{align}
    Thus, 
    \begin{align}
		\P(\origin \notin \Stab[\FiniteVertices]\config) 
			&\ge \lambdajump \P(\Ch = 1) && \text{(by equation \eqref{eq:pgf})}\notag \\
			&\ge \lambdajump \big(\P(\Ch' = 1) - \P(\Ch \neq \Ch')\big) \notag \\
			&\ge \lambdajump\bigg(\pesc[r] - \frac{1}{\pesc} + \frac{1}{\pesc[r, \lambda]}\bigg). &&\text{(by equations \eqref{eq:geom-lb} and \eqref{eq:ch'-approx-ch})}\label{eq:transient-ub}
	\end{align}	

    Let 
    $$
        \alpha_r \coloneqq \pesc[r] - \frac{1}{\pesc} + \frac{1}{\pesc[r]}.
    $$
    Fix $0 < \alpha < \alpha_r$, and let $\configg$ be the i.i.d.\ Bernoulli configuration with density $1 - \alpha \lambda^{-1}$, where $\lambda$ is large enough that $1 - \alpha \lambda^{-1} \ge 0$. If we had $\critdens(\Z^d, P, \lambda) > 1 - \alpha \lambda^{-1}$, by conservation of mass (\thref{prop:mass-conservation}) as $\FiniteVertices \nearrow \Vertices$ we would have 
    \begin{equation}\label{eq:c-o-m}
        \P(\origin \notin \Stab[\FiniteVertices] \configg) \to \P(\origin \notin \Stab\,\configg) = \alpha \lambda^{-1}.
    \end{equation}
    But $\configg$ fills $B_r$ with at least probability $1 - \alpha \lambda^{-1}|B_r|$. Thus, by Equation \eqref{eq:transient-ub}, 
    \begin{align*}
        \P(\origin \notin \Stab[\FiniteVertices]\configg) 
			&\ge (1 - \alpha \lambda^{-1}|B_r|) \cdot \lambdajump \cdot \bigg(\pesc[r] - \frac{1}{\pesc} + \frac{1}{\pesc[r, \lambda]}\bigg)\\
            &= (1 + o(1)) \cdot \lambda^{-1}(1 + o(1)) \cdot (\alpha_r + o(1))\\
            &= (\alpha_r + o(1))\lambda^{-1}.
    \end{align*}
    When $\lambda$ is sufficiently large this contradicts \eqref{eq:c-o-m}, so therefore we must have $\critdens \le 1 - \alpha \lambda^{-1}$. Taking $\alpha \nearrow \alpha_r$, we have $\critdens \le 1 - \alpha_r \lambda^{-1} + o(\lambda^{-1})$. Taking $r \nearrow \infty$, and thus $\alpha_r \to \pesc$, establishes the result. 
        
    \end{proof}

    \subsection{Proof of \thref{thm:high-sleep-rate-d-1}}
    
	To sketch the argument for $\critdens(\Z, \lambda)$, we jointly consider the probability of a large excursion with that of a sufficiently small carpet, deviating from the above procedure which had a fixed carpet size. This allows us to improve our bound on $1 - \critdens(\Z, \lambda)$ from $O(1/\lambda)$ to $O(\log \lambda / \lambda^{2})$. It also increases the carpet density: after the first weak stabilization, the carpet holes also must have $O(\log \lambda / \lambda^{2})$ density, as most likely the particle at the origin will immediately fall asleep, and the holes will become empty sites in the final stable configuration. After a subsequent weak stabilization, we are looking at the stable configuration conditioned on the probability $\sim\!\lambda^{-1}$ event the particle did not fall asleep. Thus the density of holes increases by no more than a factor of $\lambda$, and so is $O(\log \lambda / \lambda)$. From this we are able to get the improved bound $O(\log^2 \! \lambda / \lambda^{3})$. By successively bootstrapping each improvement, the bound diminishes to $O(\log^{n-1} \!\lambda / \lambda^{n})$ for any $n$.

    \begin{proof}[Proof of \thref{thm:high-sleep-rate-d-1}]
    
	As a motivating warmup, we perform the first improvement, $1 - \critdens  = O(\lambda^{-2} \log \lambda)$, separately. Take configuration $\config$ that fills $B_{\lambda}$ and finite $\FiniteVertices \supseteq B_{\lambda}$. By Equation ~\eqref{eq:pgf}, 
    \begin{equation}\label{eq:first-order-pgf}
        \P(\origin \notin \Stab[\FiniteVertices]\config) \le \P(\Ch = 1)\lambda^{-1} + \lambda^{-2}.
    \end{equation}
    We seek an upper bound 
    \begin{equation}\label{eq:c-11}\tag{$\ast$}
        \P(\Ch = 1) \le C \lambda^{-1}\log \lambda
    \end{equation}
    for some constant $C > 0$ not depending on $\FiniteVertices$ or $\config$. Then by \eqref{eq:first-order-pgf} we get 
    \begin{align}\label{eq:base-case}
        \P(\origin \notin \Stab[\FiniteVertices]\config)
            &\le C \,\lambda^{-2} \log \lambda + \lambda^{-2} \nonumber \\
            &\le C_1 \, \lambda^{-2} \log \lambda
    \end{align}
    for some properly chosen constant $C_1 > 0$ and all large $\lambda$. By \eqref{eq:prop-fill-inverted}, we also get $1 - \critdens \le C_1 \, \lambda^{-2} \log \lambda$.

    To obtain \eqref{eq:c-11}, weakly stabilize with respect to $\origin$, leading to the configuration $\weakconfig_1$. For each $x \in B_\lambda$, let $\Emp{x,1}$ denote the event that $\weakconfig_1(x) = 0$. For each $x \in B_{\lambda} \setminus \{\origin\}$, we have \begin{equation}\label{eq:emp-1}
		\P(\Emp{x,1}) \le \lambdajump < \lambda^{-1},
	\end{equation}
	as can be seen by first weakly stabilizing with respect to $x$ and the origin, and then attempting to end the full weak stabilization by putting the particle at $x$ to sleep. 

    Define the carpet $B_{\Carpet_1}$ as the maximal centered ball with integer radius $\Carpet_1 \le \lambda$ such that $\Emp{x,1}$ fails for each $x \in B_{\Carpet_1}$. After the weak stabilization, jump the particle out of the origin and legally topple it until it either returns to $\origin$---ensuring that $\Ch \ge 2$---or leaves the carpet. By gambler's ruin then, 
    \begin{equation}\label{eq:gamblers-ruin-1}
        \P(\Ch = 1 \mid \Carpet_1 = i) \le 1/(i+1).
    \end{equation}
    Also for any integer $0 \le i \le \lambda - 1$, if $\Carpet_1=i$ then $\Emp{x,1}$ holds for one of $x = \pm (i+1)$. By \eqref{eq:emp-1} then,
    \begin{equation}\label{eq:emp-x-1}
        \P(\Carpet_1 = i) < 2\lambda^{-1}.
    \end{equation}
     Combining \eqref{eq:gamblers-ruin-1} and \eqref{eq:emp-x-1},
	\begin{align*}
		\P(\Ch = 1) 
            &= \sum_{i = 0}^\lambda \P(\Ch = 1, \Carpet_1 = i)\\
			&\le \sum_{i=0}^{\lambda-1} 2\lambda^{-1}\frac{1}{i+1} + \P(\Carpet_1=\lfloor \lambda \rfloor )\frac{1}{\lfloor \lambda \rfloor  + 1}\\ 
			&\le 2\lambda^{-1}H_{\lfloor \lambda \rfloor} + \lambda^{-1}\\
			&\le C \lambda^{-1}\log \lambda
	\end{align*}
    for properly chosen $C > 0$, where $H_n$ is the $n^\text{th}$ harmonic number. This establishes \eqref{eq:c-11}.

    Now for $k \ge 1$, let
    $$
        r_k \coloneqq \sum_{j=0}^{k}\lambda^j.
    $$
    We inductively show that for each $k \ge 1$, there is a constant $C_k > 0$ such that for all large $\lambda$, 
    \begin{align}\label{eq:induction-case}
        \P(\origin \notin \Stab[\FiniteVertices]\config)
            \le C_k \lambda^{-(k+1)} \log^{k}\!\lambda 
    \end{align}
    holds for any $\config$ that fills $B_{r_k}$ and finite $\FiniteVertices \supseteq B_{r_k}$. Then, by \eqref{eq:prop-fill-inverted}, for large $\lambda$, 
    $$1 - \critdens(\Z, \lambda) \le C_k \, \lambda^{-(k+1)} \log^{k}\lambda = o(\lambda^{-k)}),$$  proving \thref{thm:high-sleep-rate-d-1}.

    We have established the $k=1$ case, although the $k=0$ case is immediate using $C_0 = 1$. Now take $k \ge 2$, and suppose we have demonstrated \eqref{eq:induction-case} at index $k-1$ for large $\lambda$. Using 
	\begin{align}
		\P(\origin \notin \Stab[\FiniteVertices] \config ) &\le \sum_{j=1}^{k}\P(\Ch=j)\lambdajump^j + \lambdajump^{k+1} \notag \\
		&\le \sum_{j=1}^{k}P(\Ch=j)\lambda^{-j} + \lambda^{-(k+1)}, \label{eq:k-order-pgf}
	\end{align}
    we seek to find for each $1 \le j \le k$, constants $C_{k,j} > 0$ such that, for all large $\lambda$,  
    \begin{equation}\tag{$\ast\ast$}\label{eq:g-kj}
        \P(\Ch=j) \le C_{k,j}\lambda^{j-(k+1)}\log^{k} \lambda
    \end{equation}
    holds for any finite $\FiniteVertices \supseteq B_{r_{k}}$ and $\config$ filling $B_{r_k}$. This then implies \eqref{eq:induction-case}, as \eqref{eq:k-order-pgf} is a finite sum. It remains to show \eqref{eq:g-kj}.

    On $\Ch \ge j$, let $\Emp{x,j}$ for $x \in B_{\lambda^k}$ denote the event that $\weakconfig_j(x) = 0$, and let $0 \le \Carpet_j \le \lambda^k$ be the maximal integer so that $\Emp{x,j}$ fails for all $x \in B_{\Carpet_j}$. Again, we wish to bound $\P(\Emp{x,j})$ and in turn $\P(\Ch = j)$, showing that on $\Ch \ge j$ it is likely the jumped-out particle returns without visiting an empty site, ensuring $\Ch \ge j + 1$. 
    
    Take $x \in B_{\lambda^k} \setminus \{\origin\}$. We have $B_{r_{k-1}}(x) \subseteq B_{r_k}$, since $r_{k} = r_{k-1} + \lambda^k$. Thus by the induction hypothesis and translation invariance, for sufficiently large $\lambda$, 
	\begin{equation}\label{eq:emp-x}
			\P(x \notin \Stab[\FiniteVertices]\config) \le C_{k-1}\lambda^{-k}\log^{k-1}\lambda.
	\end{equation}
	We use this to show that $\Emp{x,j}$ has low probability, as it creates an opportunity to have $x \notin \Stab[\FiniteVertices]\config$. Namely, by \thref{lem:indie-trials}, and taking $\lambda \ge 1$ so that $\lambdasleep \ge 1/2$ and $\lambdajump \ge (2\lambda)^{-1}$, 
	\begin{align*}
		\P(x \notin \Stab[\FiniteVertices]\config) \mid \Emp{x,j}) 
            &\ge \P(\sleepoutcome_1=0,\ldots, \sleepoutcome_{j-1}=0,\sleepoutcome_j=1\mid \Emp{x,j})\\
            &=  \lambdasleep\lambdajump^{j-1}\\
            &\ge 2^{-j}\lambda^{-j+1}.
	\end{align*}
	Combining this with \eqref{eq:emp-x} yields
	\begin{align*}
		C_{k-1}\lambda^{-k}\log^{k-1}\!\lambda \ge \P(x \notin \Stab[\FiniteVertices]\config, \Emp{x,j}) \ge \P(\Emp{x,j}) 2^{-j}\lambda^{-j+1},
	\end{align*}
	and, rearranging, 
	\begin{equation}\label{eq:emp-x-j}
		\P(\Emp{x,j}) \le 2^j C_{k-1}\lambda^{j - (k+1)}\log^{k-1}\!\lambda \eqqcolon f_{k,j}(\lambda).
	\end{equation}
    
    Repeating the argument from the $k=1$ warmup then, toppling the particle after the $j^\text{th}$ jump-out until it returns to $\origin$ or exits $B_{\Carpet_j}$, we get
    \begin{equation}\label{eq:conditional-on-edge-eq-i}
        \P(\Ch = j \mid \Ch \ge j, \Carpet_j = i) \le \frac{1}{i+1},
    \end{equation}
    and for $i \le \lambda^k-1$, using \eqref{eq:emp-x-j}, 
    \begin{equation}\label{eq:p-edge-eq-i}
        \P(\Ch \ge j, \Carpet_j = i) \le 2f_{k,j}(\lambda).
    \end{equation}
    Combining these, 
	\begin{align*}
		\P(\Ch = j) 
            &\le 2f_{k,j}(\lambda)\sum_{i=0}^{\lambda^k-1}\frac{1}{i+1} + \lambda^{-k}\\
            &= 2f_{k,j}(\lambda)H_{\lfloor \lambda^k \rfloor} + \lambda^{-k}\\
            &\le C_{k,j}\lambda^{j-(k+1)}\log^{k} \lambda
	\end{align*}
    for properly chosen $C_{k,j} > 0$ and all large $\lambda$, proving \eqref{eq:g-kj}. 
        
    \end{proof}

\section*{Acknowledgements}
We would like to thank Matt Junge for many helpful discussions and for his detailed feedback. We thank Leo Rolla for his help understanding the picture on general graphs, Nicolas Forien for posing the question regarding \thref{thm:high-sleep-rate-d-1} in Budapest, and Jacob Richey for organizing the workshop in Budapest and for interesting discussions. Kaufman was partially supported by NSF DMS Grants 2238272 and 2349366. Part of this research was conducted during the 2025 Baruch College Discrete Mathematics NSF Site REU.

\bibliographystyle{alpha}
\bibliography{BA.bib}

\end{document}